\newtheorem{theorem}{Theorem}
\newtheorem{axiom}[theorem]{Axiom}
\newtheorem{conjecture}[theorem]{Conjecture}
\newtheorem{corollary}[theorem]{Corollary}
\newtheorem{definition}[theorem]{Definition}
\newtheorem{example}[theorem]{Example}
\newtheorem{exercise}[theorem]{Exercise}
\newtheorem{lemma}[theorem]{Lemma}
\newtheorem{proposition}[theorem]{Proposition}
\newtheorem{remark}[theorem]{Remark}
\newenvironment{proof}[1][Proof]{\noindent\textbf{#1.} }{\ \rule{0.5em}{0.5em}}
\chardef\@x10\chardef\@xv60
\def\tcitime{
\def\@time{%
  \@minute\time\@hour\@minute\divide\@hour\@xv
  \ifnum\@hour<\@x 0\fi\the\@hour:%
  \multiply\@hour\@xv\advance\@minute-\@hour
  \ifnum\@minute<\@x 0\fi\the\@minute
  }}%
\def\x@hyperref#1#2#3{%
   \catcode`\~ = 12
   \catcode`\$ = 12
   \catcode`\_ = 12
   \catcode`\# = 12
   \catcode`\& = 12
   \y@hyperref{#1}{#2}{#3}%
}
\def\y@hyperref#1#2#3#4{%
   #2\ref{#4}#3
   \catcode`\~ = 13
   \catcode`\$ = 3
   \catcode`\_ = 8
   \catcode`\# = 6
   \catcode`\& = 4
}
\def\QCTOpt[#1]#2{%
  \def\QCTOptB{#1}
  \def\QCTOptA{#2}
}
\def\QCTNOpt#1{%
  \def\QCTOptA{#1}
  \let\QCTOptB\empty
}
\def\Qct{%
  \@ifnextchar[{%
    \QCTOpt}{\QCTNOpt}
}
\def\QCBOpt[#1]#2{%
  \def\QCBOptB{#1}%
  \def\QCBOptA{#2}%
}
\def\QCBNOpt#1{%
  \def\QCBOptA{#1}%
  \let\QCBOptB\empty
}
\def\Qcb{%
  \@ifnextchar[{%
    \QCBOpt}{\QCBNOpt}%
}
\def\PrepCapArgs{%
  \ifx\QCBOptA\empty
    \ifx\QCTOptA\empty
      {}%
    \else
      \ifx\QCTOptB\empty
        {\QCTOptA}%
      \else
        [\QCTOptB]{\QCTOptA}%
      \fi
    \fi
  \else
    \ifx\QCBOptA\empty
      {}%
    \else
      \ifx\QCBOptB\empty
        {\QCBOptA}%
      \else
        [\QCBOptB]{\QCBOptA}%
      \fi
    \fi
  \fi
}
\def\GRAPHICSPS#1{%
 \ifcase\GRAPHICSTYPE
   \special{ps: #1}%
 \or
   \special{language "PS", include "#1"}%
 \fi
}%
\def\graffile#1#2#3#4{%
    \bgroup
	   \@inlabelfalse
       \leavevmode
       \@ifundefined{bbl@deactivate}{\def~{\string~}}{\activesoff}%
        \raise -#4 \BOXTHEFRAME{%
           \hbox to #2{\raise #3\hbox to #2{\null #1\hfil}}}%
    \egroup
}%
\def\draftbox#1#2#3#4{%
 \leavevmode\raise -#4 \hbox{%
  \frame{\rlap{\protect\tiny #1}\hbox to #2%
   {\vrule height#3 width\z@ depth\z@\hfil}%
  }%
 }%
}%
\newif\ifwasdraft
\def\GRAPHIC#1#2#3#4#5{%
   \ifnum\draft=\@ne\draftbox{#2}{#3}{#4}{#5}%
   \else\graffile{#1}{#3}{#4}{#5}%
   \fi
}
\def\addtoLaTeXparams#1{%
    \edef\LaTeXparams{\LaTeXparams #1}}%
\newif\ifBoxFrame \BoxFramefalse
\newif\ifOverFrame \OverFramefalse
\newif\ifUnderFrame \UnderFramefalse
\def\BOXTHEFRAME#1{%
   \hbox{%
      \ifBoxFrame
         \frame{#1}%
      \else
         {#1}%
      \fi
   }%
}
\def\doFRAMEparams#1{\BoxFramefalse\OverFramefalse\UnderFramefalse\readFRAMEparams#1\end}%
\def\readFRAMEparams#1{%
 \ifx#1\end%
  \let\next=\relax
  \else
  \ifx#1i\dispkind=\z@\fi
  \ifx#1d\dispkind=\@ne\fi
  \ifx#1f\dispkind=\tw@\fi
  \ifx#1t\addtoLaTeXparams{t}\fi
  \ifx#1b\addtoLaTeXparams{b}\fi
  \ifx#1p\addtoLaTeXparams{p}\fi
  \ifx#1h\addtoLaTeXparams{h}\fi
  \ifx#1X\BoxFrametrue\fi
  \ifx#1O\OverFrametrue\fi
  \ifx#1U\UnderFrametrue\fi
  \ifx#1w
    \ifnum\draft=1\wasdrafttrue\else\wasdraftfalse\fi
    \draft=\@ne
  \fi
  \let\next=\readFRAMEparams
  \fi
 \next
 }%
\def\IFRAME#1#2#3#4#5#6{%
      \bgroup
      \let\QCTOptA\empty
      \let\QCTOptB\empty
      \let\QCBOptA\empty
      \let\QCBOptB\empty
      #6%
      \parindent=0pt
      \leftskip=0pt
      \rightskip=0pt
      \setbox0=\hbox{\QCBOptA}%
      \@tempdima=#1\relax
      \ifOverFrame
          \typeout{This is not implemented yet}%
          \show\HELP
      \else
         \ifdim\wd0>\@tempdima
            \advance\@tempdima by \@tempdima
            \ifdim\wd0 >\@tempdima
               \setbox1 =\vbox{%
                  \unskip\hbox to \@tempdima{\hfill\GRAPHIC{#5}{#4}{#1}{#2}{#3}\hfill}%
                  \unskip\hbox to \@tempdima{\parbox[b]{\@tempdima}{\QCBOptA}}%
               }%
               \wd1=\@tempdima
            \else
               \textwidth=\wd0
               \setbox1 =\vbox{%
                 \noindent\hbox to \wd0{\hfill\GRAPHIC{#5}{#4}{#1}{#2}{#3}\hfill}\\%
                 \noindent\hbox{\QCBOptA}%
               }%
               \wd1=\wd0
            \fi
         \else
            \ifdim\wd0>0pt
              \hsize=\@tempdima
              \setbox1=\vbox{%
                \unskip\GRAPHIC{#5}{#4}{#1}{#2}{0pt}%
                \break
                \unskip\hbox to \@tempdima{\hfill \QCBOptA\hfill}%
              }%
              \wd1=\@tempdima
           \else
              \hsize=\@tempdima
              \setbox1=\vbox{%
                \unskip\GRAPHIC{#5}{#4}{#1}{#2}{0pt}%
              }%
              \wd1=\@tempdima
           \fi
         \fi
         \@tempdimb=\ht1
         \advance\@tempdimb by -#2
         \advance\@tempdimb by #3
         \leavevmode
         \raise -\@tempdimb \hbox{\box1}%
      \fi
      \egroup%
}%
\def\DFRAME#1#2#3#4#5{%
 \begin{center}
     \let\QCTOptA\empty
     \let\QCTOptB\empty
     \let\QCBOptA\empty
     \let\QCBOptB\empty
	 \vbox\bgroup
        \ifOverFrame 
           #5\QCTOptA\par
        \fi
        \GRAPHIC{#4}{#3}{#1}{#2}{\z@}
        \ifUnderFrame 
           \par#5\QCBOptA
        \fi
	 \egroup
 \end{center}%
 }%
\def\FFRAME#1#2#3#4#5#6#7{%
  \@ifundefined{floatstyle}
    {
     \begin{figure}[#1]%
    }
    {
	 \ifx#1h
      \begin{figure}[H]%
	 \else
      \begin{figure}[#1]%
	 \fi
	}
  \let\QCTOptA\empty
  \let\QCTOptB\empty
  \let\QCBOptA\empty
  \let\QCBOptB\empty
  \ifOverFrame
    #4
    \ifx\QCTOptA\empty
    \else
      \ifx\QCTOptB\empty
        \caption{\QCTOptA}%
      \else
        \caption[\QCTOptB]{\QCTOptA}%
      \fi
    \fi
    \ifUnderFrame\else
      \label{#5}%
    \fi
  \else
    \UnderFrametrue%
  \fi
  \begin{center}\GRAPHIC{#7}{#6}{#2}{#3}{\z@}\end{center}%
  \ifUnderFrame
    #4
    \ifx\QCBOptA\empty
      \caption{}%
    \else
      \ifx\QCBOptB\empty
        \caption{\QCBOptA}%
      \else
        \caption[\QCBOptB]{\QCBOptA}%
      \fi
    \fi
    \label{#5}%
  \fi
  \end{figure}%
 }%
\def\makeactives{
  \catcode`\"=\active
  \catcode`\;=\active
  \catcode`\:=\active
  \catcode`\'=\active
  \catcode`\~=\active
}
   \gdef\activesoff{%
      \def"{\string"}
      \def;{\string;}
      \def:{\string:}
      \def'{\string'}
      \def~{\string~}
    }
\def\FRAME#1#2#3#4#5#6#7#8{%
 \bgroup
 \ifnum\draft=\@ne
   \wasdrafttrue
 \else
   \wasdraftfalse%
 \fi
 \def\LaTeXparams{}%
 \dispkind=\z@
 \def\LaTeXparams{}%
 \doFRAMEparams{#1}%
 \ifnum\dispkind=\z@\IFRAME{#2}{#3}{#4}{#7}{#8}{#5}\else
  \ifnum\dispkind=\@ne\DFRAME{#2}{#3}{#7}{#8}{#5}\else
   \ifnum\dispkind=\tw@
    \edef\@tempa{\noexpand\FFRAME{\LaTeXparams}}%
    \@tempa{#2}{#3}{#5}{#6}{#7}{#8}%
    \fi
   \fi
  \fi
  \ifwasdraft\draft=1\else\draft=0\fi{}%
  \egroup
 }%
\def\TEXUX#1{"texux"}
\long\def\QQQ#1#2{%
     \long\expandafter\def\csname#1\endcsname{#2}}%
\long\def\QQA#1#2{}%
\def\QTR#1#2{{\csname#1\endcsname #2}}
\def\EXPAND#1[#2]#3{}%
\def\NOEXPAND#1[#2]#3{}%
\def\LaTeXparent#1{}%
\def\ChildStyles#1{}%
\def\ChildDefaults#1{}%
\def\QTagDef#1#2#3{}%
  \providecommand{\UNICODE}[2][]{\protect\rule{.1in}{.1in}}
  \providecommand{\U}[1]{\protect\rule{.1in}{.1in}}
\def\QQfnmark#1{\footnotemark}
 \def\abstract{%
  \if@twocolumn
   \section*{Abstract (Not appropriate in this style!)}%
   \else \small 
   \begin{center}{\bf Abstract\vspace{-.5em}\vspace{\z@}}\end{center}%
   \quotation 
   \fi
  }%
   \def\registered{\relax\ifmmode{}\r@gistered
                    \else$\m@th\r@gistered$\fi}%
 \def\r@gistered{^{\ooalign
  {\hfil\raise.07ex\hbox{$\scriptstyle\rm\text{R}$}\hfil\crcr
  \mathhexbox20D}}}}{}%
\newdimen\theight
\def\newfmtname{LaTeX2e}
  \DeclareOldFontCommand{\rm}{\normalfont\rmfamily}{\mathrm}
  \DeclareOldFontCommand{\sf}{\normalfont\sffamily}{\mathsf}
  \DeclareOldFontCommand{\tt}{\normalfont\ttfamily}{\mathtt}
  \DeclareOldFontCommand{\bf}{\normalfont\bfseries}{\mathbf}
  \DeclareOldFontCommand{\it}{\normalfont\itshape}{\mathit}
  \DeclareOldFontCommand{\sl}{\normalfont\slshape}{\@nomath\sl}
  \DeclareOldFontCommand{\sc}{\normalfont\scshape}{\@nomath\sc}
\def\alpha{{\Greekmath 010B}}%
\def\beta{{\Greekmath 010C}}%
\def\gamma{{\Greekmath 010D}}%
\def\delta{{\Greekmath 010E}}%
\def\epsilon{{\Greekmath 010F}}%
\def\zeta{{\Greekmath 0110}}%
\def\eta{{\Greekmath 0111}}%
\def\theta{{\Greekmath 0112}}%
\def\iota{{\Greekmath 0113}}%
\def\kappa{{\Greekmath 0114}}%
\def\lambda{{\Greekmath 0115}}%
\def\mu{{\Greekmath 0116}}%
\def\nu{{\Greekmath 0117}}%
\def\xi{{\Greekmath 0118}}%
\def\pi{{\Greekmath 0119}}%
\def\rho{{\Greekmath 011A}}%
\def\sigma{{\Greekmath 011B}}%
\def\tau{{\Greekmath 011C}}%
\def\upsilon{{\Greekmath 011D}}%
\def\phi{{\Greekmath 011E}}%
\def\chi{{\Greekmath 011F}}%
\def\psi{{\Greekmath 0120}}%
\def\omega{{\Greekmath 0121}}%
\def\varepsilon{{\Greekmath 0122}}%
\def\vartheta{{\Greekmath 0123}}%
\def\varpi{{\Greekmath 0124}}%
\def\varrho{{\Greekmath 0125}}%
\def\varsigma{{\Greekmath 0126}}%
\def\varphi{{\Greekmath 0127}}%
\def\nabla{{\Greekmath 0272}}
\def\FindBoldGroup{%
   {\setbox0=\hbox{$\mathbf{x\global\edef\theboldgroup{\the\mathgroup}}$}}%
}
\def\Greekmath#1#2#3#4{%
    \if@compatibility
        \ifnum\mathgroup=\symbold
           \mathchoice{\mbox{\boldmath$\displaystyle\mathchar"#1#2#3#4$}}%
                      {\mbox{\boldmath$\textstyle\mathchar"#1#2#3#4$}}%
                      {\mbox{\boldmath$\scriptstyle\mathchar"#1#2#3#4$}}%
                      {\mbox{\boldmath$\scriptscriptstyle\mathchar"#1#2#3#4$}}%
        \else
           \mathchar"#1#2#3#4%
        \fi 
    \else 
        \FindBoldGroup
        \ifnum\mathgroup=\theboldgroup 
           \mathchoice{\mbox{\boldmath$\displaystyle\mathchar"#1#2#3#4$}}%
                      {\mbox{\boldmath$\textstyle\mathchar"#1#2#3#4$}}%
                      {\mbox{\boldmath$\scriptstyle\mathchar"#1#2#3#4$}}%
                      {\mbox{\boldmath$\scriptscriptstyle\mathchar"#1#2#3#4$}}%
        \else
           \mathchar"#1#2#3#4%
        \fi     	    
	  \fi}
\newif\ifGreekBold  \GreekBoldfalse
\let\SAVEPBF=\pbf
\def\pbf{\GreekBoldtrue\SAVEPBF}%
  \newcounter{equationnumber}  
  \def\mathletters{%
     \addtocounter{equation}{1}
     \edef\@currentlabel{\theequation}%
     \setcounter{equationnumber}{\c@equation}
     \setcounter{equation}{0}%
     \edef\theequation{\@currentlabel\noexpand\alph{equation}}%
  }
    \def\BibTeX{{\rm B\kern-.05em{\sc i\kern-.025em b}\kern-.08em
                 T\kern-.1667em\lower.7ex\hbox{E}\kern-.125emX}}}{}%
\def\AmS{{\protect\usefont{OMS}{cmsy}{m}{n}%
                A\kern-.1667em\lower.5ex\hbox{M}\kern-.125emS}}}{}%
\def\@@eqncr{\let\@tempa\relax
    \ifcase\@eqcnt \def\@tempa{& & &}\or \def\@tempa{& &}%
      \else \def\@tempa{&}\fi
     \@tempa
     \if@eqnsw
        \iftag@
           \@taggnum
        \else
           \@eqnnum\stepcounter{equation}%
        \fi
     \fi
     \global\tag@false
     \global\@eqnswtrue
     \global\@eqcnt\z@\cr}
\def\TCItag{\@ifnextchar*{\@TCItagstar}{\@TCItag}}
\def\@TCItag#1{%
    \global\tag@true
    \global\def\@taggnum{(#1)}}
\def\@TCItagstar*#1{%
    \global\tag@true
    \global\def\@taggnum{#1}}
\def\dfrac#1#2{{\displaystyle {#1 \over #2}}}%
\def\dint{\displaystyle \int}%
\def\dsum{\mathop{\displaystyle \sum }}%
\def\ExitTCILatex{\makeatother }
\let\DOTSI\relax
\def\RIfM@{\relax\ifmmode}%
\def\FN@{\futurelet\next}%
\def\iint{\DOTSI\intno@\tw@\FN@\ints@}%
\def\iiint{\DOTSI\intno@\thr@@\FN@\ints@}%
\def\iiiint{\DOTSI\intno@4 \FN@\ints@}%
\def\idotsint{\DOTSI\intno@\z@\FN@\ints@}%
\def\ints@{\findlimits@\ints@@}%
\newif\iflimtoken@
\newif\iflimits@
\def\findlimits@{\limtoken@true\ifx\next\limits\limits@true
 \else\ifx\next\nolimits\limits@false\else
 \limtoken@false\ifx\ilimits@\nolimits\limits@false\else
 \ifinner\limits@false\else\limits@true\fi\fi\fi\fi}%
\def\multint@{\int\ifnum\intno@=\z@\intdots@                          
 \else\intkern@\fi                                                    
 \ifnum\intno@>\tw@\int\intkern@\fi                                   
 \ifnum\intno@>\thr@@\int\intkern@\fi                                 
 \int}
\def\multintlimits@{\intop\ifnum\intno@=\z@\intdots@\else\intkern@\fi
 \ifnum\intno@>\tw@\intop\intkern@\fi
 \ifnum\intno@>\thr@@\intop\intkern@\fi\intop}%
\def\intic@{%
    \mathchoice{\hskip.5em}{\hskip.4em}{\hskip.4em}{\hskip.4em}}%
\def\negintic@{\mathchoice
 {\hskip-.5em}{\hskip-.4em}{\hskip-.4em}{\hskip-.4em}}%
\def\ints@@{\iflimtoken@                                              
 \def\ints@@@{\iflimits@\negintic@
   \mathop{\intic@\multintlimits@}\limits                             
  \else\multint@\nolimits\fi                                          
  \eat@}
 \else                                                                
 \def\ints@@@{\iflimits@\negintic@
  \mathop{\intic@\multintlimits@}\limits\else
  \multint@\nolimits\fi}\fi\ints@@@}%
\def\intkern@{\mathchoice{\!\!\!}{\!\!}{\!\!}{\!\!}}%
\def\plaincdots@{\mathinner{\cdotp\cdotp\cdotp}}%
\def\intdots@{\mathchoice{\plaincdots@}%
 {{\cdotp}\mkern1.5mu{\cdotp}\mkern1.5mu{\cdotp}}%
 {{\cdotp}\mkern1mu{\cdotp}\mkern1mu{\cdotp}}%
 {{\cdotp}\mkern1mu{\cdotp}\mkern1mu{\cdotp}}}%
\def\RIfM@{\relax\protect\ifmmode}
\def\text{\RIfM@\expandafter\text@\else\expandafter\mbox\fi}
\let\nfss@text\text
\def\text@#1{\mathchoice
   {\textdef@\displaystyle\f@size{#1}}%
   {\textdef@\textstyle\tf@size{\firstchoice@false #1}}%
   {\textdef@\textstyle\sf@size{\firstchoice@false #1}}%
   {\textdef@\textstyle \ssf@size{\firstchoice@false #1}}%
   \glb@settings}
\def\textdef@#1#2#3{\hbox{{%
                    \everymath{#1}%
                    \let\f@size#2\selectfont
                    #3}}}
\newif\iffirstchoice@
\def\Let@{\relax\iffalse{\fi\let\\=\cr\iffalse}\fi}%
\def\vspace@{\def\vspace##1{\crcr\noalign{\vskip##1\relax}}}%
\def\multilimits@{\bgroup\vspace@\Let@
 \baselineskip\fontdimen10 \scriptfont\tw@
 \advance\baselineskip\fontdimen12 \scriptfont\tw@
 \lineskip\thr@@\fontdimen8 \scriptfont\thr@@
 \lineskiplimit\lineskip
 \vbox\bgroup\ialign\bgroup\hfil$\m@th\scriptstyle{##}$\hfil\crcr}%
\def\Sb{_\multilimits@}%
\def\endSb{\crcr\egroup\egroup\egroup}%
\def\Sp{^\multilimits@}%
\newdimen\ex@
\def\rightarrowfill@#1{$#1\m@th\mathord-\mkern-6mu\cleaders
 \hbox{$#1\mkern-2mu\mathord-\mkern-2mu$}\hfill
 \mkern-6mu\mathord\rightarrow$}%
\def\leftarrowfill@#1{$#1\m@th\mathord\leftarrow\mkern-6mu\cleaders
 \hbox{$#1\mkern-2mu\mathord-\mkern-2mu$}\hfill\mkern-6mu\mathord-$}%
\def\leftrightarrowfill@#1{$#1\m@th\mathord\leftarrow
\mkern-6mu\cleaders
 \hbox{$#1\mkern-2mu\mathord-\mkern-2mu$}\hfill
 \mkern-6mu\mathord\rightarrow$}%
\def\overrightarrow{\mathpalette\overrightarrow@}%
\def\overrightarrow@#1#2{\vbox{\ialign{##\crcr\rightarrowfill@#1\crcr
 \noalign{\kern-\ex@\nointerlineskip}$\m@th\hfil#1#2\hfil$\crcr}}}%
\def\overleftarrow{\mathpalette\overleftarrow@}%
\def\overleftarrow@#1#2{\vbox{\ialign{##\crcr\leftarrowfill@#1\crcr
 \noalign{\kern-\ex@\nointerlineskip}$\m@th\hfil#1#2\hfil$\crcr}}}%
\def\overleftrightarrow{\mathpalette\overleftrightarrow@}%
\def\overleftrightarrow@#1#2{\vbox{\ialign{##\crcr
   \leftrightarrowfill@#1\crcr
 \noalign{\kern-\ex@\nointerlineskip}$\m@th\hfil#1#2\hfil$\crcr}}}%
\def\underrightarrow{\mathpalette\underrightarrow@}%
\def\underrightarrow@#1#2{\vtop{\ialign{##\crcr$\m@th\hfil#1#2\hfil
  $\crcr\noalign{\nointerlineskip}\rightarrowfill@#1\crcr}}}%
\def\underleftarrow{\mathpalette\underleftarrow@}%
\def\underleftarrow@#1#2{\vtop{\ialign{##\crcr$\m@th\hfil#1#2\hfil
  $\crcr\noalign{\nointerlineskip}\leftarrowfill@#1\crcr}}}%
\def\underleftrightarrow{\mathpalette\underleftrightarrow@}%
\def\underleftrightarrow@#1#2{\vtop{\ialign{##\crcr$\m@th
  \hfil#1#2\hfil$\crcr
 \noalign{\nointerlineskip}\leftrightarrowfill@#1\crcr}}}%
\def\qopnamewl@#1{\mathop{\operator@font#1}\nlimits@}
\let\nlimits@\displaylimits
\def\setboxz@h{\setbox\z@\hbox}
\def\varlim@#1#2{\mathop{\vtop{\ialign{##\crcr
 \hfil$#1\m@th\operator@font lim$\hfil\crcr
 \noalign{\nointerlineskip}#2#1\crcr
 \noalign{\nointerlineskip\kern-\ex@}\crcr}}}}
 \def\rightarrowfill@#1{\m@th\setboxz@h{$#1-$}\ht\z@\z@
  $#1\copy\z@\mkern-6mu\cleaders
  \hbox{$#1\mkern-2mu\box\z@\mkern-2mu$}\hfill
  \mkern-6mu\mathord\rightarrow$}
\def\leftarrowfill@#1{\m@th\setboxz@h{$#1-$}\ht\z@\z@
  $#1\mathord\leftarrow\mkern-6mu\cleaders
  \hbox{$#1\mkern-2mu\copy\z@\mkern-2mu$}\hfill
  \mkern-6mu\box\z@$}
\def\projlim{\qopnamewl@{proj\,lim}}
\def\injlim{\qopnamewl@{inj\,lim}}
\def\varinjlim{\mathpalette\varlim@\rightarrowfill@}
\def\varprojlim{\mathpalette\varlim@\leftarrowfill@}
\def\varliminf{\mathpalette\varliminf@{}}
\def\varliminf@#1{\mathop{\underline{\vrule\@depth.2\ex@\@width\z@
   \hbox{$#1\m@th\operator@font lim$}}}}
\def\varlimsup{\mathpalette\varlimsup@{}}
\def\varlimsup@#1{\mathop{\overline
  {\hbox{$#1\m@th\operator@font lim$}}}}
\def\align{\@verbatim \frenchspacing\@vobeyspaces \@alignverbatim
You are using the "align" environment in a style in which it is not defined.}
\let\csname endalign*\endcsname =\endtrivlist
\def\alignat{\@verbatim \frenchspacing\@vobeyspaces \@alignatverbatim
You are using the "alignat" environment in a style in which it is not defined.}
\let\csname endalignat*\endcsname =\endtrivlist
\def\xalignat{\@verbatim \frenchspacing\@vobeyspaces \@xalignatverbatim
You are using the "xalignat" environment in a style in which it is not defined.}
\let\csname endxalignat*\endcsname =\endtrivlist
\def\gather{\@verbatim \frenchspacing\@vobeyspaces \@gatherverbatim
You are using the "gather" environment in a style in which it is not defined.}
\let\csname endgather*\endcsname =\endtrivlist
\def\multiline{\@verbatim \frenchspacing\@vobeyspaces \@multilineverbatim
You are using the "multiline" environment in a style in which it is not defined.}
\let\csname endmultiline*\endcsname =\endtrivlist
\def\arrax{\@verbatim \frenchspacing\@vobeyspaces \@arraxverbatim
You are using a type of "array" construct that is only allowed in AmS-LaTeX.}
\def\tabulax{\@verbatim \frenchspacing\@vobeyspaces \@tabulaxverbatim
You are using a type of "tabular" construct that is only allowed in AmS-LaTeX.}
\let\csname endarrax*\endcsname =\endtrivlist
\let\csname endtabulax*\endcsname =\endtrivlist
 \def\endequation{%
     \ifmmode\ifinner 
      \iftag@
        \addtocounter{equation}{-1} 
        $\hfil
           \displaywidth\linewidth\@taggnum\egroup \endtrivlist
        \global\tag@false
        \global\@ignoretrue   
      \else
        $\hfil
           \displaywidth\linewidth\@eqnnum\egroup \endtrivlist
        \global\tag@false
        \global\@ignoretrue 
      \fi
     \else   
      \iftag@
        \addtocounter{equation}{-1} 
        \eqno \hbox{\@taggnum}
        \global\tag@false%
        $$\global\@ignoretrue
      \else
        \eqno \hbox{\@eqnnum}
        $$\global\@ignoretrue
      \fi
     \fi\fi
 } 
 \newif\iftag@ \tag@false
 \def\TCItag{\@ifnextchar*{\@TCItagstar}{\@TCItag}}
 \def\@TCItag#1{%
     \global\tag@true
     \global\def\@taggnum{(#1)}}
 \def\@TCItagstar*#1{%
     \global\tag@true
     \global\def\@taggnum{#1}}
     \def\tag{\@ifnextchar*{\@tagstar}{\@tag}}
     \def\@tag#1{%
         \global\tag@true
         \global\def\@taggnum{(#1)}}
     \def\@tagstar*#1{%
         \global\tag@true
         \global\def\@taggnum{#1}}
\begin{document}

\author{Nadjia El Saadi $^{(1)}$\thanks{
Corresponding author. E-mail address: enadjia@gmail.com.} \& Zakia Benbaziz $%
^{(2)}$ \\
$^{(1)}$ LAMOPS, ENSSEA, Algiers, Algeria\\
$^{\text{ }(2)\ }$TPFA, ENS Kouba, Algiers, Algeria\\
}
\title{On the existence of solutions for a nonlinear stochastic partial
differential equation arising as a model of phytoplankton aggregation }
\date{}
\maketitle

\begin{abstract}
In this paper, we are interested in the analytical study of a nonlinear
Stochastic Partial Differential Equation (SPDE) arising as a model of
phytoplankton aggregation. This SPDE consists in a diffusion equation with a
chemotaxis term responsible of self-attraction of phytoplankton cells and a
multiplicative branching noise. Existence of mild solutions is established
through weak and tightness arguments.

\begin{equation*}
\end{equation*}%
Keywords: Phytoplankton aggregation, Nonlinear stochastic partial
differential equation, Semigroups, Chemotaxis, Gaussian space-time white
noise, Weak convergence, Tightness, Skorohod representation theorem.
\end{abstract}

\bigskip Introduction

In this paper, we are interested in the following nonlinear stochastic
partial differential equation:%
\begin{equation}
\frac{\partial }{\partial t}u(t,x)=D\frac{\partial ^{2}}{\partial x^{2}}%
u(t,x)-\frac{\partial }{\partial x}\left( u(t,x)\left[ G\ast u^{0}(t,.)%
\right] (x)\right) +\sqrt{\lambda u_{+}(t,x)}\overset{.}{W}(t,x),
\label{EDPS}
\end{equation}%
in $[0,T]\times \Omega ,$ where $\Omega =\left] 0,L\right[ $ is a bounded
domain with boundary $\partial \Omega $ in $\mathbb{R},\;x$ is a one
dimensional coordinate, $t$ is time. The motivation in studying this SPDE$\ $%
is that equation (\ref{EDPS}) arises as a model of phytoplankton
aggregation
(\cite{ElSaadi06},\cite{ElSaadi06Bis},\cite{ElSaadi07a},\cite{ElSaadi07}).
In fact, the authors in (\cite{ElSaadi06},\cite{ElSaadi06Bis},\cite%
{ElSaadi07a},\cite{ElSaadi07}) have investigated an individual-based model
(IBM) of a population of phytoplankton that takes into account small scales
biological mechanisms for phytoplankton cells. These processes are: random
dispersal of phytoplankton cells due to turbulence, spatial interactions
between phytoplankton cells caused by chemical signals and random division
and death of phytoplankton cells. The aim of such modelling was to capture
the main features of the individual dynamics at small scales that are
responsible at a larger scale for the formation of aggregating patterns. An
Eulerian version of the IBM has been derived in (\cite{ElSaadi06},\cite%
{ElSaadi07a}) and the SPDE $(\ref{EDPS})$ is obtained as a continuum limit
of this IBM. $u(t,x)$ represents the spatio-temporal distribution density of
phytoplankton on the vertical water column. We denote by $u_{+}(t,x)$ the
positive part of $u(t,x)$, that is $u_{+}(t,x)=\max (u(t,x),0).$ The
diffusion term in $(\ref{EDPS})$ takes into account the random spread of the
phytoplankton cells with the coefficient of diffusion $D$; while the
transport term, which is a chemotaxis term, describes the interaction
mechanisms between the phytoplankton cells via the velocity $G\ast u^{0}.$
The latter has the form of convolution as in \cite{Mogilner99}, i.e.,
\begin{equation*}
\left[ G\ast u^{0}(t,.)\right] (x)=\int_{\mathbb{R}}G\left( x-x^{\prime
}\right) u^{0}(t,x^{\prime })dx^{\prime },
\end{equation*}%
where $G$ is the attractive force given by:

\begin{equation*}
G\left( x\right) =\left[ \left( \left\vert x\right\vert -r_{1}\right) \left(
\left\vert x\right\vert -r_{0}\right) \right] 1_{\left[ -r_{1},-r_{0}\right]
\cup \left[ r_{0},r_{1}\right] }\left( x\right) .
\end{equation*}%
and
\begin{equation*}
u^{0}(x)=\left\{
\begin{array}{l}
u(x)\text{\ \ \ \ \ \ \ \ \ \ }0<x<L \\
0\;\;\;\ \ \ \ \ \ \ \ \ \ \ \ \ x\leq 0\;\text{or\ }x\geq L.%
\end{array}%
\right.
\end{equation*}%
The biological explanation of the interactions between phytoplankton cells
is based on the following works (\cite{Fitt84},\cite{Fitt85},\cite{Hauser},%
\cite{Levandowsky78},\cite{Levandowsky85},\cite{Spero79},\cite{Spero81},\cite%
{Spero85}) which report that species of phytoplankton such as
dinoflagellates and motile algae have chemosensory abilities i.e., they can
sense the chemical field generated by the presence of other dinoflagellates
and motile algae that are present at a certain distance. More precisely,
dinoflagellates and motile algae leak organic matter such as sugar and amino
acids, forming regions around them having concentrations higher than average
\cite{Azam}. Experiments studies on the chemosensory abilities in
dinoflagellates and motile algae (\cite{Spero85}, \cite{Fitt85}) show that
the released products attract other dinoflagellates and motile algae that
are present in a suitable neighborhood. It has also been observed that high
concentrations of these products inhibit the chemosensory behavior in
dinoflagellates and motile algae \cite{Fitt85}.\newline
So, if a phytoplankton cell is located at a position $x$, the extracellular
products released by this cell form a concentration around $x$, which is
highest in the closest vicinity of $x$ (for instance, on a radius of length $%
r_{0}$ ( $r_{0}$ small)) and then decreases progressively. Via their
chemosensory abilities, all dinoflagellates at positions $x^{\prime }$ such
that $r_{0}\leq \left\vert x-x^{\prime }\right\vert \leq r_{1}$ ( $%
r_{1}>r_{0}$ and $r_{0}$ too small relative to $r_{1}$) detect the
differences of concentration in water and hence are attracted to the cell in
$x.$ Beyond $r_{1}$, cells cannot perceive the difference in concentration
because they are sensory limited (\cite{Berg},\cite{Jackson87},\cite%
{Jackson89}). Hence, they are not attracted. $\left( G\ast u^{0}\right) (x)$
describes the velocity induced at the site $x$ by the net effects of all
phytoplankton cells at various sites $x^{\prime }$. The kernel $G\left(
x-x^{\prime }\right) $ associates a strength of interaction per unit density
as a function of the distance $x-x^{\prime }$ between any two phytoplankton
cells in sites $x$ and $x^{\prime }$. $G$ behaves as a gradient, that is, a
cell is attracted to the region of high density. \newline
$\overset{.}{W}(t,x)$ is a white noise in space and time defined on some
probability space $(\Lambda ,\mathcal{F},P)$ \cite{Walsh} and $\lambda $ is
the branching rate. The term $\sqrt{\lambda u_{+}(t,x)}\overset{.}{W}(t,x)$
describes the stochastic fluctuations of the number of phytoplankton cells
as a result of the random birth and death events.\newline
Here, $\mathbb{R}$ represents the vertical axis oriented downward from the
surface to the seabed. The point $0$ is at the surface of water and $L$ is
the limit of the "euphotic zone" ie. the upper layers in the vertical water
column. The restriction of model (\ref{EDPS}) to this domain is due to the
fact that phytoplankton cells can survive and multiply only in the
\textquotedblright euphotic zone\textquotedblright . Therefore, von-Neumann
boundary conditions are imposed at the surface $0$ and at $L$:
\begin{equation}
\dfrac{\partial }{\partial x}u(t,x)=0,\text{ on }\mathbb{[}0,T]\times
\partial \Omega .  \label{bord}
\end{equation}%
and the initial density is
\begin{equation}
u(0,x)=u_{0}(x)\geq 0,\text{ }in\text{ }\Omega .  \label{initial}
\end{equation}

\bigskip

To our knowledge, SPDE\ (\ref{EDPS}) is new and unknown in both biological
and mathematical literature. From the mathematical point of view, it is a
complication. The major difficulties come from the two nonlinear terms: the
chemotaxis term with a convolution and the stochastic branching term with
the nonlinear multiplicative noise. We point out that previous works (\cite%
{Adioui}, \cite{ElSaadi12}) have concerned the study of the deterministic
part of the SPDE (\ref{EDPS}) (i.e. equation (\ref{EDPS}) without the noise
term) with the goal of analyzing the influence of the spatial \ interactions
between phytoplankton cells on the aggregation process. Here in this paper,
we are interested by the whole equation (\ref{EDPS}) which includes the
effect of the stochastic branching term{\large . }The volatility $\sqrt{%
\lambda u_{+}(t,x)}$ of the space-time white noise is non-Lipschitzian;
hence, we are not able to expect the existence of strong or mild solutions
for (\ref{EDPS}). However, we might expect the existence of a mild solution
to (\ref{EDPS}) in the weak sense; that is we shall construct a probability
basis $(\overline{\Lambda },\overline{\mathcal{F}},(\overline{\mathcal{F}}%
_{t})_{t\in \lbrack 0,T]},\overline{P})$, on which there exists a Gaussian
space-time white noise $\overline{\overset{.}{W}}(t,x)$ and a mild solution $%
\widetilde{u}(t,x)$ to (\ref{EDPS}).{\large \ }This will be done by using
tightness arguments which are useful in obtaining weak convergence. The rest
of the paper is organized as follows. In the next section, we present the
abstract formulation of the problem and in section 3, we present our results
on existence of mild solutions at the weak sense. Several steps are arranged
for proving the main theorem. Some estimates on the semi-group generated by
the operator $\dfrac{\partial ^{2}}{\partial x^{2}}$ are listed in the
Appendix.

\section{Abstract formulation}

We reformulate problem (\ref{EDPS}) with conditions (\ref{bord}) and (\ref%
{initial}) as a stochastic version of an abstract Cauchy problem which can
be treated by using the theory of semigroups of operators:%
\begin{equation}
\left\{
\begin{array}{ll}
du(t) & =\left( Au(t)-B\left[ u(t)g_{G}\left( u(t)\right) \right] \right)
dt+C(u(t))dW(t) \\
u(0) & =u_{0}%
\end{array}%
\right.  \label{8}
\end{equation}%
The operator $A:$ ${\mathcal{D}}(A)\subset X:$ $=L^{2}(\Omega )\rightarrow X$
is defined by
\begin{equation}
\begin{array}{ll}
Aw & =D\dfrac{d^{2}w}{dx^{2}}, \\
{\mathcal{D}}(A) & =\left\{ w\in H^{2}(\Omega ):\text{ }w_{\left\vert
\partial \Omega \right. }^{\prime }=0\right\} ,%
\end{array}
\label{10000}
\end{equation}%
and the operator $B:$ ${\mathcal{D}}(B)\subset X\rightarrow X$ by%
\begin{eqnarray}
Bw &=&\dfrac{d}{dx}w,  \label{10002} \\
{\mathcal{D}}(B) &=&H^{1}(\Omega ).  \notag
\end{eqnarray}%
$H^{1}(\Omega )$ and $H^{2}(\Omega )$ denote usual Sobolev functions spaces$%
. $ We will denote by $\langle ,\rangle $ and $\left\Vert .\right\Vert ,$
respectively, the scalar product and the norm in $X$. The operator $A$
commutes with $B$ and they are related by the following formula
\begin{equation}
\left\langle Bu,DBu\right\rangle =-\left\langle u,Au\right\rangle ,\forall
u\in {\mathcal{D}}(A).  \label{tlemcen1}
\end{equation}%
We endow $D(B)$ with the graph norm $\left\vert x\right\vert _{{\mathcal{D}}%
(B)}=\left\Vert Bx\right\Vert +$\ $\left\Vert x\right\Vert $ for $x\in D(B).$%
\newline
The operator $g_{G}$ is defined as follows:%
\begin{equation}
g_{G}\left( \varphi \right) \left( x\right) =\left[ G\ast \varphi \right]
\left( x\right) =\dint_{\mathbb{R}}G\left( x-y\right) \varphi \left(
y\right) dy.  \label{conv}
\end{equation}
\newline
By straightforward consequence of standard calculations, we can establish
that $g_{G}:{\mathcal{D}}(B)\rightarrow {\mathcal{D}}(B)$, continuously so
there exists a constant $\delta ,$ so that
\begin{equation}
\left\vert g_{G}\left( \varphi \right) \right\vert _{{\mathcal{D}}\left(
B\right) }\leq \delta \left\vert \varphi \right\vert _{{\mathcal{D}}\left(
B\right) },\text{\thinspace }\forall \varphi \in {\mathcal{D}}(B).
\label{cont}
\end{equation}%
Note also that $G\ast u^{0}$ is uniformly bounded. Hence, $g_{G}(u)$ is
uniformly bounded. As a result of H\"{o}lder's inequality, we get
\begin{equation}
\left\vert g_{G}(u)\right\vert _{\infty }\leq \sqrt{L}\left\vert
G\right\vert _{\infty }\left\Vert u\right\Vert ,\text{\thinspace }\forall
u\in D(B).  \label{hauston1}
\end{equation}%
On the other hand, we have
\begin{equation}
\left\vert Bg_{G}(u)\right\vert _{\infty }\leq \sqrt{L}\left\vert
G\right\vert _{\infty }\left\vert u\right\vert _{{\mathcal{D}}\left(
B\right) },\text{\thinspace }\forall u\in D(B).  \label{hauston2}
\end{equation}

\bigskip $C$ is the non linear operator,
\begin{equation*}
C:X\longrightarrow L_{HS}({\mathcal{D}}(B),X)
\end{equation*}%
where $C(u)$ is the linear operator of multiplication by the function $\sqrt{%
\lambda u_{+}},$ that is$:$
\begin{equation*}
\forall u\in X,\text{ }w\in {\mathcal{D}}(B),\quad (C(u)\text{ }w)(x)=\sqrt{%
\lambda u_{+}(x)}w(x),\qquad x\in \Omega
\end{equation*}%
$L_{HS}({\mathcal{D}}(B),X)${\Large \ }denotes the space of Hilbert-Schmidt
operators from ${\mathcal{D}}(B)$\ to $X$\ equipped with the norm%
\begin{equation*}
\left\Vert T\right\Vert _{2}=(\sum\limits_{k=1}^{\infty
}\left\Vert Te_{k}\right\Vert ^{2})^{1/2}\ <{\infty },
\newline
\end{equation*}%
where $\{e_{k}\}$\ is a complete orthonormal basis in $X.$
\newline
Suppose that the compact embedding: \newline
\begin{equation}
J:{\mathcal{D}}(B)\hookrightarrow X\text{ \ \ \ \ \ \ \ \ is Hilbert-Schmidt,%
}  \label{cond}
\end{equation}%
then $(W(t))_{t\in \lbrack 0,T]}$ can be considered as a cylindrical Wiener
process on ${\mathcal{D}}(B)$ with values in $X,$ defined on the probability
space $(\Lambda ,\mathcal{F},P)$ with a filtration $(\mathcal{F}_{t})_{t\in
\lbrack 0,T]}$.

\begin{proposition}
\label{analytique}The operator $A$ defined by $\left( \ref{10000}\right) $
is the generator of an analytic semigroup of contractions in $X$, $%
(T(t))_{t\geq 0},$ compact for $t>0$. The restrictions $T(t)_{/{\mathcal{D}}%
(B)}$ send ${\mathcal{D}}(B)$ into itself and are uniformly bounded in ${%
\mathcal{D}}(B)$ (that is, there exists $C_{1}\geq 0,$ such that, $%
\left\vert T(t)_{/{\mathcal{D}}(B)}\right\vert _{D(B)}\leq C_{1}$, for $%
t\geq 0$).
\end{proposition}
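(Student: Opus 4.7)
The operator $A$ is $D$ times the Neumann Laplacian on $(0,L)$, and my strategy is to combine standard spectral theory of self-adjoint operators with the explicit cosine eigenbasis. First I would verify that $A$ is self-adjoint and non-positive on $X$: for $w,v\in\mathcal{D}(A)$, integration by parts together with the Neumann boundary condition yield $\langle Aw,v\rangle = -D\langle w',v'\rangle$, giving both symmetry and the dissipativity $\langle Aw,w\rangle\leq 0$, which is precisely the identity (\ref{tlemcen1}) already recorded in the excerpt. Self-adjointness on the prescribed domain is a standard Neumann-Laplacian fact: $I-A$ is surjective because for every $f\in X$ the problem $u-Du''=f$ with zero normal trace admits an $H^{2}$ solution (via Lax--Milgram on $H^{1}(\Omega)$), forcing $\mathcal{D}(A^{*})=\mathcal{D}(A)$.

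Once self-adjointness and $\sigma(A)\subset(-\infty,0]$ are in hand, both the contraction and analyticity properties follow from the functional calculus: $T(t)=e^{tA}$ is analytic because a self-adjoint non-positive operator is sectorial of every angle strictly less than $\pi/2$, and it is contractive since $|e^{t\lambda}|\leq 1$ for $\lambda\leq 0$. For compactness at $t>0$, the most transparent route is the explicit eigenbasis $\phi_{0}=1/\sqrt{L}$, $\phi_{n}(x)=\sqrt{2/L}\cos(n\pi x/L)$ for $n\geq 1$, which diagonalises $A$ with eigenvalues $-D(n\pi/L)^{2}$; then $T(t)w=\sum_{n\geq 0}e^{-D(n\pi/L)^{2}t}\langle w,\phi_{n}\rangle\phi_{n}$ has exponentially decaying singular values, so $T(t)$ is Hilbert--Schmidt, and a fortiori compact, for every $t>0$.

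For the invariance of $\mathcal{D}(B)=H^{1}(\Omega)$, I would lift the commutation of $A$ and $B$ (already stated in the excerpt) to the semigroup level: on $\mathcal{D}(A)$ one has $BT(t)w=T(t)Bw$ for all $t\geq 0$, and this extends to $\mathcal{D}(B)$ by density of $\mathcal{D}(A)$ in $\mathcal{D}(B)$ combined with closedness of $B$. Together with the $X$-contractivity of $T(t)$ this gives $\|T(t)w\|\leq\|w\|$ and $\|BT(t)w\|=\|T(t)Bw\|\leq\|Bw\|$, so $T(t)\mathcal{D}(B)\subset\mathcal{D}(B)$ and $|T(t)w|_{\mathcal{D}(B)}\leq|w|_{\mathcal{D}(B)}$, showing that $C_{1}=1$ is admissible.

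The only delicate point I anticipate is the promotion of the $A$--$B$ commutation from $\mathcal{D}(A)$ to $\mathcal{D}(B)$, since intertwining of two unbounded operators with different domains is not automatic. A clean fallback is to expand directly in the cosine basis: every $w\in L^{2}(\Omega)$ has such an expansion regardless of boundary data, and both $T(t)$ and $B$ act diagonally on the $\phi_{n}$, so the termwise inequality $e^{-2D(n\pi/L)^{2}t}\leq 1$ gives $\|BT(t)w\|^{2}=\sum_{n}(n\pi/L)^{2}e^{-2D(n\pi/L)^{2}t}|a_{n}|^{2}\leq\|Bw\|^{2}$ directly, bypassing any subtlety with unbounded commutation.
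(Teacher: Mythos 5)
The paper does not actually prove this proposition: it simply points to the corresponding result in the fish-school/aggregation papers of Adioui et al., so your self-contained spectral argument is a genuinely different (and more explicit) route. The core of it is sound: symmetry and dissipativity via integration by parts with the Neumann trace, self-adjointness via Lax--Milgram for $u - Du'' = f$, analyticity and contractivity from the functional calculus for a non-positive self-adjoint operator, and compactness (indeed the Hilbert--Schmidt property) of $T(t)$ for $t>0$ from the summability of $e^{-2D(n\pi/L)^2 t}$. What your approach buys over the paper's citation is a concrete eigenfunction representation $T(t)=\sum_j e^{-w_j^2 t}\langle\cdot,\varphi_j\rangle\varphi_j$, which is exactly the representation the paper itself uses later in Lemmas \ref{A1} and \ref{A2}, so your proof also makes those appendix estimates self-consistent.

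One warning on the $\mathcal{D}(B)$-invariance step. Your primary route, lifting the ``commutation'' of $A$ and $B$ to $BT(t)=T(t)B$, is not merely delicate --- it is false for the Neumann semigroup. If $u(t)=T(t)w$ solves the heat equation with Neumann conditions, then $v=u_x$ solves the heat equation with \emph{Dirichlet} conditions, so $BT(t)=S(t)B$ where $S(t)$ is the Dirichlet semigroup, not $T(t)$; concretely, $B\varphi_n$ is a sine, which is not a Neumann eigenfunction, so $T(t)B\varphi_n\neq BT(t)\varphi_n$. (The paper's statement that ``$A$ commutes with $B$'' is itself only the integration-by-parts identity (\ref{tlemcen1}), not an operator intertwining.) Fortunately your fallback is the correct argument and should be promoted to the main one: although $B$ does not act diagonally in the cosine basis, it sends the orthonormal cosines to the orthogonal system $\{(n\pi/L)\sqrt{2/L}\sin(n\pi x/L)\}$, so Parseval still gives
\begin{equation*}
\left\Vert BT(t)w\right\Vert ^{2}=\sum_{n\geq 1}\left( \frac{n\pi }{L}\right)^{2}e^{-2D(n\pi /L)^{2}t}\left\vert a_{n}\right\vert ^{2}\leq \left\Vert Bw\right\Vert ^{2},
\end{equation*}
valid for every $w\in H^{1}(\Omega )$ since the even periodic extension of an $H^{1}$ function differentiates termwise in its cosine series. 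Combined with $\Vert T(t)w\Vert\leq\Vert w\Vert$ this yields $T(t)\mathcal{D}(B)\subset\mathcal{D}(B)$ with $C_{1}=1$, as you claim.
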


\begin{proof}
The proof is similar to that one in \cite{Adioui}.
\end{proof}

We propose to solve, on time interval $[0,T],$ equation {\large (}\ref{8}%
{\large ) }in integrated form by using the stochastic generalization of the
classical variation of constants formula%
\begin{equation}
\begin{array}{l}
u(t)=T(t)u_{0}-\dint_{0}^{t}T(t-s)B\left[ u(s)g_{G}\left( u(s)\right) \right]
ds+\dint_{0}^{t}T(t-s)C\left( u(s)\right) dW(s).%
\end{array}
\label{6}
\end{equation}%
We remind that a predictable $X$-valued stochastic process $(u(t))_{t\in
\lbrack 0,T]}$ defined on the probability space $(\Lambda ,\mathcal{F},(%
\mathcal{F}_{t})_{t\in \lbrack 0,T]},P)$ is called a mild solution of the
differential equation (\ref{8}) if $u(t)$ is a solution of (\ref{6}). On
another hand, the stochastic process $(u(t))_{t\in \lbrack 0,T]}$ is called
a weakened solution of (\ref{8}) if $u(t)$ is a solution of
\begin{equation}
u(t)=u_{0}+A(\dint\limits_{0}^{t}u(s)ds)-\dint\limits_{0}^{t}B\left[
u(s)g_{G}\left( u(s)\right) \right] ds+\dint\limits_{0}^{t}C(u(s))dW(s).
\label{wkned}
\end{equation}%
It is well known that solutions of (\ref{6}) and (\ref{wkned}) are
equivalent (\cite{CH},\cite{CH1}). Note that the stochastic integral $%
\dint_{0}^{t}T(t-s)C\left( u(s)\right) dW(s)$ is well defined since $%
E(\dint_{0}^{t}\left\Vert T(t-s)C\left( u(s)\right) \right\Vert
_{2}^{2}ds)<\infty ,\forall t\in \lbrack 0,T]$ . This is due to the fact
that $C\left( u(s)\right) $ is Hilbert-Schmidt operator and $T(t-s)$ is
linear bounded operator then, basing on the theory of Hilbert-Schmidt
operators (e.g. \cite{Gelfand}, Chapter\textit{\ }I\textit{), }$%
T(t-s)C\left( u(s)\right) $ is Hilbert-Schmidt operator too$.$\newline

\section{Existence of solutions}

This section is concerned with the existence of mild solutions for (\ref{8}%
). For this purpose, we need first to establish several propositions and
lemmas. We start by giving some useful estimates.

\begin{lemma}
\label{prin}1) There exists a constant $M,$ such that, for all $u$, $v\in {%
\mathcal{D}}\left( B\right) ,$ we have
\begin{equation*}
\left\Vert B\left[ ug_{G}(u)\right] -B\left[ vg_{G}(v)\right] \right\Vert
\leq M\max (\left\vert u\right\vert _{{\mathcal{D}}\left( B\right) }\text{, }%
\left\vert v\right\vert _{{\mathcal{D}}\left( B\right) })\left\vert
u-v\right\vert _{{\mathcal{D}}\left( B\right) }.
\end{equation*}%
2) There exists a positive constant $Q,$ such that, for all $u\in {\mathcal{D%
}}\left( B\right) ,$ it holds that
\begin{equation*}
\left\Vert B\left[ ug_{G}(u)\right] \right\Vert \leq Q\left\vert
u\right\vert _{{\mathcal{D}}\left( B\right) }\left\Vert u\right\Vert .
\end{equation*}%
3) There exists a positive constant $C,$ such that, for all $u\in X,$ it
holds that
\begin{equation}
\left\Vert BT(t)u\right\Vert \leq \frac{C}{\sqrt{t}}\left\Vert u\right\Vert
,\;\forall t>0.  \label{rouen4}
\end{equation}
\end{lemma}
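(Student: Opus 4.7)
The plan is to treat the three estimates in turn, using throughout the Leibniz rule $B(fg)=(Bf)g+f(Bg)$ (since $B=d/dx$), the linearity of the convolution operator $g_G$ (so that $g_G(u)-g_G(v)=g_G(u-v)$), the uniform $L^\infty$-bounds \eqref{hauston1}--\eqref{hauston2}, and, for (3), the analyticity of $(T(t))_{t\geq 0}$ from Proposition~\ref{analytique}. Parts (1) and (2) are algebraic manipulations combined with Hölder's inequality, whereas (3) reduces to a standard fractional-power semigroup estimate once \eqref{tlemcen1} is properly interpreted.

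For (1), I first expand
\[
B[ug_G(u)]-B[vg_G(v)] = B(u-v)\,g_G(u) + (Bv)\,g_G(u-v) + (u-v)\,Bg_G(u) + v\,Bg_G(u-v).
\]
Each of the four summands is then estimated in $X=L^2(\Omega)$ by factoring out the sup-norm factor and applying \eqref{hauston1} or \eqref{hauston2}. For instance, $\|B(u-v)\,g_G(u)\|\leq |g_G(u)|_\infty\|B(u-v)\|\leq \sqrt{L}\,|G|_\infty\,\|u\|\,|u-v|_{\mathcal{D}(B)}$, and symmetric arguments dispatch the other three terms. Using $\|w\|,\|Bw\|\leq|w|_{\mathcal{D}(B)}$, summing yields the desired bound with $M=4\sqrt{L}\,|G|_\infty$. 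Part (2) follows from the same decomposition $B[ug_G(u)]=(Bu)g_G(u)+u\,Bg_G(u)$: the first term is bounded by $\sqrt{L}\,|G|_\infty\|Bu\|\,\|u\|$ via \eqref{hauston1}, the second by $\sqrt{L}\,|G|_\infty\|u\|\,|u|_{\mathcal{D}(B)}$ via \eqref{hauston2}, so $Q=2\sqrt{L}\,|G|_\infty$ suffices.

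For (3), the structural observation is that \eqref{tlemcen1} gives $D\|Bu\|^2=\langle u,-Au\rangle=\|(-A)^{1/2}u\|^2$ on $\mathcal{D}(A)$, hence $\sqrt{D}\,\|Bu\|=\|(-A)^{1/2}u\|$. Since $A$ is self-adjoint and negative (the Neumann Laplacian times $D$), this identity extends by density to $\mathcal{D}((-A)^{1/2})$, which here coincides with $H^1(\Omega)=\mathcal{D}(B)$. For arbitrary $u\in X$ and $t>0$, analyticity ensures $T(t)u\in\mathcal{D}(A)$, and the classical bound $\|(-A)^{1/2}T(t)\|_{\mathcal{L}(X)}\leq C_0\,t^{-1/2}$ for analytic semigroups then yields $\|BT(t)u\| = D^{-1/2}\|(-A)^{1/2}T(t)u\|\leq (C_0/\sqrt{D})\,t^{-1/2}\|u\|$, proving \eqref{rouen4}.

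The only non-routine ingredient is the identification $\mathcal{D}((-A)^{1/2})=\mathcal{D}(B)$ underlying (3): this is where the Neumann boundary conditions and the self-adjointness of $A$ are really being used. Once this (standard for the Neumann Laplacian on an interval, and presumably collected among the semigroup estimates in the Appendix) is in place, the rest of the proof is purely algebraic. I therefore expect the Lemma's proof to cite the fractional-domain identification rather than re-derive it, and to otherwise consist of the four-term expansion in (1) together with its two immediate corollaries.
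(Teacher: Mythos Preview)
Your argument is correct. The paper, however, does not give a proof at all: it simply refers to Lemma~2.1 of \cite{Adioui0} and records nothing further. Your Leibniz four-term expansion for part~(1), its two-term specialisation for part~(2) together with the $L^\infty$-bounds \eqref{hauston1}--\eqref{hauston2}, and the identification $\sqrt{D}\,\|Bu\|=\|(-A)^{1/2}u\|$ from \eqref{tlemcen1} combined with the analytic-semigroup estimate $\|(-A)^{1/2}T(t)\|\leq C_0\,t^{-1/2}$ for part~(3), constitute exactly the standard route and are almost certainly what the cited reference contains. So you have not taken a different approach so much as supplied the details the paper outsources; your closing guess that the proof would ``cite the fractional-domain identification rather than re-derive it'' undershoots slightly --- the entire lemma is cited away.
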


\begin{proof}
For the claim, the proof is the same as the one given in (Lemma 2.1, \cite%
{Adioui0}).
\end{proof}

By using Lemma 2, we can prove

\begin{proposition}
\label{prop1}Consider the following abstract problem%
\begin{equation}
\left\{
\begin{array}{ll}
du(t) & =\left( Au(t)-B\left[ u(t)g_{G}\left( u(t)\right) \right] \right)
dt+a(u(t))dW(t) \\
u(0) & =u_{0}.%
\end{array}%
\right.  \label{pb1}
\end{equation}%
Suppose that the operator $a$ satisfies%
\begin{equation*}
a:X\rightarrow L_{HS}({\mathcal{D}}\left( B\right) ,X)\text{ }
\end{equation*}%
and there exists a constant $K$ such that for all $x,y\in X,$%
\begin{equation*}
\left\Vert a(x)-a(y)\right\Vert _{2}\leqslant K\left\Vert x-y\right\Vert .
\end{equation*}%
Then, for each $u_{0}$ $\in D(B)$, problem (\ref{pb1}) has a unique mild
solution $u(t)$, $t\in \lbrack 0,T]$ \ such that $\underset{t\in \lbrack 0,T]%
}{sup}E(\left\vert u(t)\right\vert _{D(B)}^{2})<\infty .$
\end{proposition}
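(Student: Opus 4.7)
The plan is to construct the mild solution by a Picard--Banach fixed-point argument applied to
$$\Phi(u)(t)=T(t)u_{0}-\int_{0}^{t}T(t-s)\,B\bigl[u(s)\,g_{G}(u(s))\bigr]\,ds+\int_{0}^{t}T(t-s)\,a(u(s))\,dW(s),$$
in the Banach space $\mathcal{E}_{T}$ of predictable ${\mathcal{D}}(B)$-valued processes with norm $\|u\|_{\mathcal{E}_{T}}^{2}=\sup_{t\in[0,T]}E|u(t)|_{{\mathcal{D}}(B)}^{2}$. Because Lemma~\ref{prin}(2) makes the drift quadratic in $|u|_{{\mathcal{D}}(B)}$, I would first solve on a short horizon $[0,T^{*}]$ inside a closed ball of radius $R>C_{1}|u_{0}|_{{\mathcal{D}}(B)}$, and subsequently extend to $[0,T]$.

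First I would check that $\Phi$ maps the ball into itself. Proposition~\ref{analytique} gives $|T(t)u_{0}|_{{\mathcal{D}}(B)}\leq C_{1}|u_{0}|_{{\mathcal{D}}(B)}$. For the drift, using commutativity $BT(t)=T(t)B$ (inherited from the commutativity of $A$ and $B$), Lemma~\ref{prin}(2) gives $\|B[u(s)g_{G}(u(s))]\|\leq Q|u(s)|_{{\mathcal{D}}(B)}\|u(s)\|$, while the smoothing bound~(\ref{rouen4}) provides $\|BT(t-s)B[u(s)g_{G}(u(s))]\|\leq C(t-s)^{-1/2}\|B[u(s)g_{G}(u(s))]\|$; the singular kernel $(t-s)^{-1/2}$ is integrable, so the ${\mathcal{D}}(B)$-norm of the drift term is bounded by a constant multiple of $\sqrt{T^{*}}\,R^{2}$.

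The stochastic convolution is the delicate step. A direct It\^{o} isometry combined with~(\ref{rouen4}) would leave the non-integrable kernel $(t-s)^{-1}\|a(u(s))\|_{2}^{2}$ in the bound for $E\|B\int_{0}^{t}T(t-s)a(u(s))\,dW(s)\|^{2}$. To bypass this I would employ the Da Prato--Kwapie\'{n}--Zabczyk factorization method: for a well-chosen $\alpha\in(0,1/2)$ set
$$Y_{\alpha}(s)=\int_{0}^{s}(s-r)^{-\alpha}T(s-r)\,a(u(r))\,dW(r),$$
which is $X$-valued with $\sup_{s}E\|Y_{\alpha}(s)\|^{p}<\infty$ for every $p\geq 2$ thanks to the linear growth of $a$ inherited from its Lipschitz property, and then exploit the identity
$$\int_{0}^{t}T(t-s)\,a(u(s))\,dW(s)=\frac{\sin\pi\alpha}{\pi}\int_{0}^{t}(t-s)^{\alpha-1}T(t-s)\,Y_{\alpha}(s)\,ds.$$
Applying $B$ and combining~(\ref{rouen4}) with a H\"{o}lder inequality in $s$ at a sufficiently high exponent yields the required ${\mathcal{D}}(B)$-bound on the stochastic convolution, proportional to a positive power of $T^{*}$.

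Contraction of $\Phi$ on the ball then follows from Lemma~\ref{prin}(1), giving $\|B[ug_{G}(u)]-B[vg_{G}(v)]\|\leq M\max(|u|_{{\mathcal{D}}(B)},|v|_{{\mathcal{D}}(B)})|u-v|_{{\mathcal{D}}(B)}$, combined with the Lipschitz hypothesis on $a$; both contributions to $\|\Phi(u)-\Phi(v)\|_{\mathcal{E}_{T^{*}}}$ carry a factor that vanishes as $T^{*}\to 0$, so Banach's theorem delivers a unique mild solution on $[0,T^{*}]$. An a~priori bound on $\sup_{t\in[0,T]}E|u(t)|_{{\mathcal{D}}(B)}^{2}$, obtained by applying It\^{o}'s formula to suitable functionals of $u$ on smooth Yosida-type approximations and invoking Gronwall's lemma, rules out blow-up and enables extension to all of $[0,T]$. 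The principal obstacle is the ${\mathcal{D}}(B)$-regularity of the stochastic convolution, which forces the factorization detour; the quadratic character of the drift is a secondary difficulty handled by the local-to-global extension.
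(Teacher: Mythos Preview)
Your overall scheme---contraction on a ball in the space of predictable $D(B)$-valued processes with norm $\sup_t E|u(t)|_{D(B)}^{2}$---coincides with the paper's successive-approximation argument, and your treatment of the drift via Lemma~\ref{prin} and~(\ref{rouen4}) is exactly what the paper does. The substantive divergence is the stochastic convolution, and there your factorization detour has a genuine gap. With $\alpha\in(0,\tfrac12)$ (needed so that $Y_{\alpha}$ has finite $X$-moments), applying $B$ to the outer deterministic integral and invoking~(\ref{rouen4}) produces the kernel $(t-s)^{\alpha-1}(t-s)^{-1/2}=(t-s)^{\alpha-3/2}$. Since $\alpha-\tfrac32<-1$, this kernel lies in no $L^{q}([0,t])$ for $q\ge 1$, so no H\"older pairing---at any exponent---makes the integral finite. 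The factorization trick buys time-continuity of the stochastic convolution, not the extra half spatial derivative you need to land in $D(B)$; the sentence ``applying $B$ and combining~(\ref{rouen4}) with a H\"older inequality in $s$ at a sufficiently high exponent yields the required ${\mathcal D}(B)$-bound'' does not close.

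The paper avoids this issue by not using~(\ref{rouen4}) on the stochastic term at all. It estimates the $D(B)$-norm of the stochastic increment directly from the It\^o isometry together with the uniform operator bound $|T(t)_{/{\mathcal D}(B)}|_{D(B)}\le C_{1}$ of Proposition~\ref{analytique}, obtaining
\[
E\Bigl|\int_{0}^{t}T(t-s)\bigl[a(u_{n}(s))-a(u_{n-1}(s))\bigr]\,dW(s)\Bigr|_{D(B)}^{2}\le 2C_{1}^{2}K^{2}T\,\sup_{0\le s\le T}E|u_{n}(s)-u_{n-1}(s)|_{D(B)}^{2},
\]
with no smoothing estimate and no factorization. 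It then simply takes $T$ small enough that $2[M^{2}R^{2}(T+2\sqrt{T}C)^{2}+C_{1}^{2}K^{2}T]<\tfrac12$; there is no local-to-global extension via It\^o/Yosida/Gronwall as you propose, and the bound $|u_{n}(t)|_{D(B)}\le R$ that you impose through the ball is taken as a standing assumption in the paper's iteration.
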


\begin{proof}
We use the method of successive approximations. Let $u_{0}$ $\in D(B)$ and
define the sequence $(u_{n})_{n\geq 1}$ by%
\begin{equation*}
\begin{array}{l}
u_{n+1}(t)=T(t)u_{0}(t)-\dint%
\limits_{0}^{t}T(t-s)B[u_{n}(s)g_{G}(u_{n}(s))]ds+\dint%
\limits_{0}^{t}T(t-s)a(u_{n}(s))dW(s).%
\end{array}%
\end{equation*}%
Let us denote by $Y$ the Banach space of all nonanticipating $D(B)$-valued
continuous stochastic processes $\left\{ U(t)\right\} _{t\in \lbrack 0,T]}$
endowed with the norm%
\begin{equation*}
\left\vert \left\vert \left\vert U\right\vert \right\vert \right\vert
=\left\{ \underset{0\leq t\leq T}{\sup }E\left\vert U(t)\right\vert
_{D(B)}^{2}\right\} ^{1/2}<\infty
\end{equation*}%
Assume that $(u_{n}(t))_{n\geq 1}$ is bounded in $D(B),$ that is, $\exists
R>0$ such that $\left\vert (u_{n}(t)\right\vert _{D(B)}\leq R$, $\forall
t\in \lbrack 0,T],\forall n\geq 1.$ Hence $(u_{n}(t),t\in \lbrack
0,T])_{n\geq 1}$ is also bounded in $Y$, since $\left\vert \left\vert
\left\vert u_{n}\right\vert \right\vert \right\vert =\left\{ \underset{0\leq
t\leq T}{\sup }E\left\vert u_{n}(t)\right\vert _{D(B)}^{2}\right\}
^{1/2}\leq R.$

Let
\begin{equation*}
h_{n+1}(t)=E(\left\vert u_{n+1}(t)-u_{n}(t)\right\vert _{D(B)}^{2}),\quad
n\geq 0
\end{equation*}%
We have%
\begin{equation*}
\begin{array}{l}
h_{n+1}(t)\leq 2E\left\vert \dint\limits_{0}^{t}T(t-s)\left[
a(u_{n}(s))-a(u_{n-1}(s))\right] dW(s)\right\vert _{D(B)}^{2} \\
+2E\left\vert \dint\limits_{0}^{t}T(t-s)\left(
B[u_{n}(s)g_{G}(u_{n}(s))]-B[u_{n-1}(s)g_{G}(u_{n-1}(s))]\right)
ds\right\vert _{D(B)}^{2} \\
\leq \underset{I}{\underbrace{2E(\dint\limits_{0}^{t}\left\Vert T(t-s)\left[
a(u_{n}(s))-a(u_{n-1}(s))\right] \right\Vert _{2}^{2}ds)}} \\
+\underset{II}{\underbrace{2E\left\vert \dint\limits_{0}^{t}T(t-s)\left(
B[u_{n}(s)g_{G}(u_{n}(s))]-B[u_{n-1}(s)g_{G}(u_{n-1}(s))]\right)
ds\right\vert _{D(B)}^{2}}}%
\end{array}%
\end{equation*}

Since $\exists \ C_{1}\geq 0$ such that $\left\vert T(t)_{/{\mathcal{D}}%
(B)}\right\vert _{D(B)}\leq C_{1}$, for $t\geq 0,$ hence%
\begin{equation*}
\begin{array}{cc}
I & \leq 2C_{1}^{2}E(\dint\limits_{0}^{t}\left\Vert
a(u_{n}(s))-a(u_{n-1}(s))\right\Vert _{2}^{2}ds) \\
& \leq 2C_{1}^{2}K^{2}E(\dint\limits_{0}^{t}\left\Vert
u_{n}(s)-u_{n-1}(s)\right\Vert ^{2}ds) \\
& \leq 2C_{1}^{2}K^{2}T\ E(\underset{0\leq s\leq T}{\sup }\left\vert
u_{n}(s)-u_{n-1}(s)\right\vert _{D(B)}^{2}).%
\end{array}%
\end{equation*}

To calculate $II$, we have%
\begin{equation*}
\begin{array}{l}
\left\vert \dint\limits_{0}^{t}T(t-s)\left(
B[u_{n}(s)g_{G}(u_{n}(s))]-B[u_{n-1}(s)g_{G}(u_{n-1}(s))]\right)
ds\right\vert _{D(B)} \\
=\left\Vert \dint\limits_{0}^{t}T(t-s)\left(
B[u_{n}(s)g_{G}(u_{n}(s))]-B[u_{n-1}(s)g_{G}(u_{n-1}(s))]\right)
ds\right\Vert \\
+\left\Vert B\dint\limits_{0}^{t}T(t-s)\left(
B[u_{n}(s)g_{G}(u_{n}(s))]-B[u_{n-1}(s)g_{G}(u_{n-1}(s))]\right)
ds\right\Vert \\
\leq \dint\limits_{0}^{t}M\max (\left\vert u_{n}(s)\right\vert
_{D(B)},\left\vert u_{n-1}(s)\right\vert _{D(B)})\left\vert
u_{n}(s)-u_{n-1}(s)\right\vert _{D(B)}ds \\
+\dint\limits_{0}^{t}\dfrac{C}{\sqrt{t-s}}\left\Vert
B[u_{n}(s)g_{G}(u_{n}(s))]-B[u_{n-1}(s)g_{G}(u_{n-1}(s))]\right\Vert ds \\
\leq MR(T\ +2\sqrt{T}C)\underset{0\leq t\leq T}{\sup }\left\vert
u_{n}(t)-u_{n-1}(t)\right\vert _{D(B)}.%
\end{array}%
\end{equation*}%
Therefore%
\begin{equation*}
II\leq 2M^{2}R^{2}(T\ +2\sqrt{T}C)^{2}E(\underset{0\leq t\leq T}{\sup }%
\left\vert u_{n}(t)-u_{n-1}(t)\right\vert _{D(B)}^{2}).
\end{equation*}%
Hence, we obtain%
\begin{equation*}
\begin{array}{cc}
h_{n+1}(t) & \leq 2[M^{2}R^{2}(\ T+2\sqrt{T}C)^{2}+C_{1}^{2}K^{2}T]E(%
\underset{0\leq t\leq T}{\sup }\left\vert u_{n}(t)-u_{n-1}(t)\right\vert
_{D(B)}^{2})%
\end{array}%
\end{equation*}%
By choosing $T>0$ small enough so that
\begin{equation}
2[M^{2}R^{2}(\ T+2\sqrt{T}C)^{2}+C_{1}^{2}K^{2}T]<\frac{1}{2},
\label{condition T}
\end{equation}%
we obtain%
\begin{equation*}
h_{n+1}(t)\leq \frac{1}{2^{n}}E(\underset{0\leq t\leq T}{\sup }\left\vert
u_{1}(t)-u_{0}(t)\right\vert _{D(B)}^{2}).
\end{equation*}%
$u_{n+1}(t)-u_{n}(t)$ is the general term of an absolutely convergent series
in the Banach space $Y.$ Hence, $\{u_{n}(.)\}$ is a convergent sequence in $%
Y.$ The limit in $Y,$ $u_{\infty }(t)=\underset{n\longrightarrow \infty }{%
\lim }u_{n}(t)$ is a solution of (\ref{pb1}) for fixed $t$. To complete the
proof of the proposition, we have to show that the sequence $u_{n}(t)$
remains in $Y,$ \ i.e. $E(\underset{0\leq t\leq T}{\sup }\left\vert
u_{n+1}(t)\right\vert _{D(B)}^{2})<\infty .$%
\begin{equation*}
\begin{array}{ll}
E(\left\vert u_{n+1}(t)\right\vert _{D(B)}^{2}) & \leq 2C_{1}^{2}\left\vert
u_{0}(t)\right\vert _{D(B)}^{2}+2M^{2}R^{2}(T+2C\sqrt{T})^{2} \\
& E(\underset{0\leq t\leq T}{\sup }\left\vert u_{n}(t)\right\vert
_{D(B)}^{2})+2C_{1}^{2}K_{1}^{2}T\ E(\underset{0\leq s\leq T}{\sup }%
\left\vert u_{n}(s)\right\vert _{D(B)}^{2}) \\
& \leq 2C_{1}^{2}R^{2}+2R^{2}\left[ M^{2}R^{2}(T+2C\sqrt{T}%
)^{2}+C_{1}^{2}K_{1}^{2}T\right] ,\forall t\in \lbrack 0,T].%
\end{array}%
\end{equation*}%
Using $T>0$ such that (\ref{condition T}) holds, we have%
\begin{equation*}
\underset{0\leq t\leq T}{\sup }E(\left\vert u_{n+1}(t)\right\vert
_{D(B)}^{2})\leq 2C_{1}^{2}R^{2}+\frac{1}{2}R^{2}<\infty \text{ \ .\ }
\end{equation*}%
To prove the uniqueness of the solution (up to equivalence), consider $u$
and $v$ two mild solutions of (\ref{pb1}) on $[0,T].$ We might show that
\begin{equation*}
\underset{0\leq t\leq T}{\sup }E\left\vert u(t)-v(t)\right\vert
_{D(B)}^{2}=0.\newline
\end{equation*}%
We have%
\begin{equation*}
\begin{array}{cc}
E\left\vert u(t)-v(t)\right\vert _{D(B)}^{2} & \leq \underset{(1)}{%
\underbrace{2E\left\vert
\dint\limits_{0}^{t}T(t-s)(B[u(s)g_{G}(u(s))]-B[v(s)g_{G}(v(s))])ds\right%
\vert _{D(B)}^{2}}} \\
& +\underset{(2)}{\underbrace{2E\left\vert
\dint\limits_{0}^{t}T(t-s)[a(u(s))-a(v(s))]dW(s)\right\vert _{D(B)}^{2}}}%
\end{array}%
\end{equation*}%
We obtain%
\begin{equation*}
(2)\leq 2C_{1}^{2}K^{2}TE(\underset{0\leq s\leq T}{\sup }\left\vert
u(s)-v(s)\right\vert _{D(B)}^{2})
\end{equation*}%
To calculate $(1)$, we have%
\begin{equation*}
\begin{array}{cc}
\left\vert
\dint\limits_{0}^{t}T(t-s)(B[u(s)g_{G}(u(s))]-B[v(s)g_{G}(v(s))])ds\right%
\vert _{D(B)} &  \\
\leq MR(T+2C\sqrt{T})\underset{0\leq t\leq T}{\sup }\left\vert
u(t)-v(t)\right\vert _{D(B)} &
\end{array}%
\end{equation*}%
Then%
\begin{equation*}
(1)\leq 2M^{2}\gamma ^{2}R^{2}(T+2C\sqrt{T})^{2}E(\underset{0\leq t\leq T}{%
\sup }\left\vert u(t)-v(t)\right\vert _{D(B)}^{2})
\end{equation*}%
Finally, we obtain%
\begin{equation*}
\begin{array}{ll}
E\left\vert u(t)-v(t)\right\vert _{D(B)}^{2} & \leq \left[ 2M^{2}R^{2}(T+2C%
\sqrt{T})^{2}+2C_{1}^{2}K^{2}T\right] \\
& E(\underset{0\leq t\leq T}{\sup }\left\vert u(t)-v(t)\right\vert
_{D(B)}^{2})\text{ \ \ \ \ \ \ \ \ \ }\forall t\in \lbrack 0,T]%
\end{array}%
\end{equation*}%
Since $T$ is chosen such that $\left[ 2M^{2}R^{2}(T+2C\sqrt{T}%
)^{2}+2C_{1}^{2}K^{2}T\right] <\frac{1}{2}$, hence we have%
\begin{equation*}
\underset{0\leq t\leq T}{\sup }E(\left\vert u(t)-v(t)\right\vert
_{D(B)}^{2})=0\text{\ \ \ \ }
\end{equation*}%
which leads to%
\begin{equation*}
u(t)=v(t),\ \forall t\in \lbrack 0,T],\qquad P-a.s.\text{\ \ \ }
\end{equation*}
\end{proof}

We return to problem (\ref{8}). We propose as in \cite{Bo} to construct a
sequence of Lipschitz continuous mappings $(a_{n}(u))_{n\in
\mathbb{N}
}$ to approximate the non-Lipschitzian function$\sqrt{\lambda u^{+}}$ on $%
\mathbb{R}
.$ For $n\in
\mathbb{N}
$, define $a_{n}(.)$ by%
\begin{equation}
a_{n}(u)=\left\{
\begin{array}{cc}
0 & u<0 \\
\sqrt{n}u & \ \ 0\leq u<\dfrac{1}{n} \\
\sqrt{\lambda u} & u\geq \dfrac{1}{n}%
\end{array}%
\right.  \label{contr1}
\end{equation}%
It is clear that the sequence $(a_{n}(u))_{n}$ converges to $\sqrt{\lambda
u^{+}}$ uniformly on $u$ $\in $ $%
\mathbb{R}
$ as $n\longrightarrow +\infty .$

\begin{lemma}
Suppose that the sequence $(a_{n}(u))_{n}$ defined by (\ref{contr1})
satisfies
\begin{equation}
a_{n}:X\rightarrow L_{HS}(\mathcal{D(}B\mathcal{)},X)\text{ }  \label{contr2}
\end{equation}%
so that%
\begin{equation}
\forall u\in X,\text{ }w\in {\mathcal{D}}(B),\quad a_{n}(u)\text{ }%
w=a_{n}(u)w.  \label{contr3}
\end{equation}%
Then, there exists a constant $K^{\prime }$ such that for all $u,v\in X,$%
\begin{equation}
\left\Vert a_{n}(u)-a_{n}(v)\right\Vert _{2}\leqslant K^{\prime }\left\Vert
u-v\right\Vert .  \label{contr4}
\end{equation}
\end{lemma}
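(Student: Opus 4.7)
The plan is to reduce the Hilbert--Schmidt estimate for the multiplication operator $a_n(u)-a_n(v)$ to the Lipschitz property of the real-valued function $a_n:\mathbb{R}\to\mathbb{R}$, exploiting the fact that the Hilbert--Schmidt norm is independent of the chosen orthonormal basis so that a convenient basis of $\mathcal{D}(B)$ with summable $L^\infty$ norms can be employed.

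First, I would establish that the real function $a_n$ defined by (\ref{contr1}) is globally Lipschitz on $\mathbb{R}$ with some constant $L_n$ (depending on $n$). This is a routine piecewise verification: $a_n$ is zero on $(-\infty,0)$, affine on $[0,1/n)$ with slope of order $\sqrt{n}$, and equal to $\sqrt{\lambda u}$ on $[1/n,\infty)$ whose derivative $\sqrt{\lambda}/(2\sqrt{u})$ is bounded above by $\tfrac{1}{2}\sqrt{\lambda n}$. Taking $L_n$ to be the largest of these slopes yields $|a_n(r)-a_n(s)|\le L_n|r-s|$ for all $r,s\in\mathbb{R}$.

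Second, I would select an orthonormal basis $\{e_k\}_{k\ge 0}$ of $\mathcal{D}(B)$ built from the eigenfunctions of $A$. On $\Omega=(0,L)$ with Neumann boundary conditions these are, up to a normalizing constant, $\phi_k(x)=\sqrt{2/L}\cos(k\pi x/L)$ (and $\phi_0=1/\sqrt{L}$), which are orthogonal in both $L^2$ and in the standard $H^1$-inner product; dividing by $\sqrt{1+(k\pi/L)^2}$ produces a complete orthonormal basis of $H^1=\mathcal{D}(B)$. The crucial observation is that
\[
\|e_k\|_\infty \;=\; \frac{\sqrt{2/L}}{\sqrt{1+(k\pi/L)^2}},
\]
so that $\sigma:=\sum_{k\ge 0}\|e_k\|_\infty^2<\infty$. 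Since the Hilbert--Schmidt norm does not depend on the basis, I may then compute
\[
\|a_n(u)-a_n(v)\|_2^2 \;=\; \sum_{k}\int_\Omega\bigl|a_n(u(x))-a_n(v(x))\bigr|^2|e_k(x)|^2\,dx,
\]
and pulling $\|e_k\|_\infty^2$ out of each integral and applying the pointwise Lipschitz bound from the first step gives
\[
\|a_n(u)-a_n(v)\|_2^2 \;\le\; L_n^2\,\sigma\,\|u-v\|^2,
\]
so $K':=L_n\sqrt{\sigma}$ works.

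The main conceptual obstacle is matching the Hilbert--Schmidt structure of $L_{HS}(\mathcal{D}(B),X)$ with the pointwise multiplicative action of $a_n(u)$: we need a basis of $\mathcal{D}(B)$ whose elements have $L^\infty$-norms in $\ell^2$. The assumption (\ref{cond}) that the embedding $J:\mathcal{D}(B)\hookrightarrow X$ is Hilbert--Schmidt only yields $\ell^2$-summability of $\|e_k\|_X$; what saves the argument is the one-dimensional Sobolev embedding $H^1(\Omega)\hookrightarrow L^\infty(\Omega)$, combined with the explicit decay rate $\|e_k\|_\infty\sim 1/k$ for the cosine basis, which is the specific input that makes $\sigma<\infty$. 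A secondary technicality — that $K'$ depends on $n$ and so the Lipschitz estimate is not uniform in the approximation parameter — is immaterial for the statement being proved but will matter when passing to the limit in the subsequent sections.
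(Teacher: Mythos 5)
Your proof is correct, and it follows the same overall template as the paper's — bound $\left\Vert a_{n}(u)-a_{n}(v)\right\Vert _{2}^{2}$ by $\sum_{k}\int_{\Omega }\left\vert a_{n}(u(x))-a_{n}(v(x))\right\vert ^{2}\left\vert e_{k}(x)\right\vert ^{2}dx$, use the pointwise Lipschitz property of the scalar function $a_{n}$, and then sum over the basis — but it diverges at the one step that actually carries the weight, and there your version is the sound one. The paper extracts the factor $\left\vert e_{k}\right\vert _{D(B)}^{2}$ from each term and then asserts $\sum_{k}\left\vert e_{k}\right\vert _{D(B)}^{2}=\sum_{k}\left\Vert Je_{k}\right\Vert ^{2}=\left\Vert J\right\Vert _{2}^{2}<\infty $ via (\ref{cond}); but for an orthonormal basis of $D(B)$ the left-hand side is $\sum_{k}1=+\infty $, and the middle equality confuses the $D(B)$-norm with the $X$-norm, so as written the paper's summability step is vacuous. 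What is genuinely needed is $\sum_{k}\left\Vert e_{k}\right\Vert _{\infty }^{2}<\infty $, which the Hilbert--Schmidt hypothesis on $J$ does not by itself deliver (it only gives $\ell ^{2}$-summability of $\left\Vert e_{k}\right\Vert _{L^{2}}$, and the one-dimensional embedding $H^{1}\hookrightarrow L^{\infty }$ applied crudely gives $\left\Vert e_{k}\right\Vert _{\infty }\lesssim \left\vert e_{k}\right\vert _{D(B)}=1$, which is not summable). Your choice of the explicit Neumann cosine eigenbasis, normalized in $H^{1}$, together with basis-independence of the Hilbert--Schmidt norm, gives $\left\Vert e_{k}\right\Vert _{\infty }\sim 1/k$ and hence $\sigma <\infty $; this is exactly the missing ingredient, so your argument repairs the paper's proof rather than merely paralleling it. Two small caveats: the graph norm $\left\Vert Bx\right\Vert +\left\Vert x\right\Vert $ is not Hilbertian, so one should pass to the equivalent norm $(\left\Vert Bx\right\Vert ^{2}+\left\Vert x\right\Vert ^{2})^{1/2}$ before invoking basis-independence (a harmless adjustment of constants); and, as a defect of the paper's definition (\ref{contr1}) rather than of your proof, $a_{n}$ is discontinuous at $u=1/n$ unless $\lambda =1$ (the middle branch should presumably read $\sqrt{\lambda n}\,u$), so the "piecewise slope" bound for the Lipschitz constant should be stated for the corrected function.
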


\begin{proof}
Let $u,v\in X$ and $\{e_{k}\}$ a complete orthonormal basis in $D(B).$ Hence%
\begin{equation*}
\begin{array}{ll}
\left\Vert a_{n}(u)-a_{n}(v)\right\Vert _{2}^{2} & =\sum\limits_{k=1}^{+%
\infty }\left\Vert (a_{n}(u)-a_{n}(v))e_{k}\right\Vert ^{2} \\
& =\sum\limits_{k=1}^{+\infty }\int\limits_{0}^{L}\left\vert
a_{n}(u(x))-a_{n}(v(x))\right\vert ^{2}\left\vert e_{k}(x)\right\vert ^{2}dx%
\end{array}%
\end{equation*}%
As $a_{n}(u)$ is lipschitz in $u$ on $%
\mathbb{R}
,$ then there exists $K>0$ such that
\begin{equation*}
\begin{array}{ll}
\left\Vert a_{n}(u)-a_{n}(v)\right\Vert _{2}^{2} & \leq
K^{2}\sum\limits_{k=1}^{+\infty }\int\limits_{0}^{L}\left\vert
u(x))-v(x)\right\vert ^{2}\left\vert e_{k}(x)\right\vert ^{2}dx \\
& \leq K^{2}\sum\limits_{k=1}^{+\infty }\left\Vert (u-v)e_{k}\right\Vert ^{2}
\\
& \leq K^{2}\left\Vert u-v\right\Vert ^{2}\sum\limits_{k=1}^{+\infty
}\left\vert e_{k}\right\vert _{D(B)}^{2}%
\end{array}%
\end{equation*}%
Using the hypothesis (\ref{cond}), we have%
\begin{equation*}
\sum\limits_{k=1}^{+\infty }\left\vert e_{k}\right\vert
_{D(B)}^{2}=\sum\limits_{k=1}^{+\infty }\left\Vert Je_{k}\right\Vert
^{2}=\left\Vert J\right\Vert _{2}^{2}<\infty ,
\end{equation*}%
which leads to the fact that there exists $M_{1}>0$ such that
\begin{equation*}
\left\Vert a_{n}(u)-a_{n}(v)\right\Vert _{2}^{2}\leq M_{1}K^{2}\left\Vert
u-v\right\Vert ^{2}
\end{equation*}
\end{proof}

\bigskip

Let us now consider the following approximating SPDE's:

\ \

\begin{equation}
u_{n}(t)=I_{0}(t)-I_{n}(t)+J_{n}(t),\text{ \ \ \ \ }n\in
\mathbb{N}
\label{approx}
\end{equation}

with%
\begin{equation*}
\begin{array}{l}
I_{0}(t)=T(t)u_{0}(t) \\
I_{n}(t)=\dint\limits_{0}^{t}T(t-s)B[u_{n}(s)g_{G}(u_{n}(s))]ds \\
J_{n}(t)=\dint\limits_{0}^{t}T(t-s)a_{n}(u_{n}(s))dW(s)%
\end{array}%
\end{equation*}

where the sequence $(a_{n}(.))_{n}$ is defined by (\ref{contr1})\ and
satisfies (\ref{contr2}).

We aim to prove weak convergence of the sequence $\{u_{n}(t),t\in \lbrack
0,T]\}_{n\in
\mathbb{N}
}$ by proving the tightness of $\{I_{n}(t),t\in \lbrack 0,T]\}_{n\in
\mathbb{N}
}$ and $\{J_{n}(t),t\in \lbrack 0,T]\}_{n\in
\mathbb{N}
}$ since $I_{0}(t)$ is deterministic.

\begin{proposition}
Let the initial condition $u_{0}\in D(B)$ be a continuous deterministic
mapping on $\Omega .$The sequences $\{I_{n}(t),t\in \lbrack 0,T]\}_{n\in
\mathbb{N}
}$ and $\{J_{n}(t),t\in \lbrack 0,T]\}_{n\in
\mathbb{N}
}$ are tight in $\mathcal{C(}[0,T],D(B)).$
\end{proposition}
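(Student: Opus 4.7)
My plan is to verify tightness in $\mathcal{C}([0,T],D(B))$ via a Kolmogorov-type criterion: for each sequence $\{X_n\}$ it suffices to establish (i) a compact-containment condition, namely that for each $t\in[0,T]$ the family of laws of $X_n(t)$ is tight in $D(B)$, and (ii) a uniform modulus-of-continuity estimate of the form $E|X_n(t)-X_n(s)|_{D(B)}^p\leq C|t-s|^{1+\varepsilon}$ for some $p\geq 2$ and $\varepsilon>0$. The compactness needed in (i) will come from the compactness of $T(t)$ on $D(B)$ for $t>0$ (Proposition \ref{analytique}), while (ii) will follow from the analyticity estimate (\ref{rouen4}), the bounds of Lemma \ref{prin}, and the Lipschitz estimate (\ref{contr4}).

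\textbf{Tightness of $\{I_n\}$.} Since $I_n$ is a pathwise Bochner integral, the analysis is essentially deterministic given $\omega$. By Lemma \ref{prin}(2) the integrand is controlled by $Q|u_n(s)|_{D(B)}\|u_n(s)\|$, and Proposition \ref{prop1} applied to (\ref{approx}) yields $\sup_n\sup_{t\in[0,T]}E|u_n(t)|_{D(B)}^2<\infty$. Combining the uniform bound $|T(t)|_{D(B)}\leq C_1$ with (\ref{rouen4}), I would estimate $E|I_n(t)-I_n(s)|_{D(B)}^2$ to obtain H\"{o}lder continuity in mean square of exponent strictly greater than $1/2$. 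For the compact-containment condition I would split
\begin{equation*}
I_n(t) = T(\delta)\int_0^{t-\delta}T(t-\delta-s)B[u_n g_G(u_n)](s)\,ds + \int_{t-\delta}^t T(t-s)B[u_n g_G(u_n)](s)\,ds;
\end{equation*}
the first term lies in a compact subset of $D(B)$ because $T(\delta)$ is compact on $D(B)$ and the inner integral is bounded in $D(B)$ uniformly in $n$, while the second can be made uniformly small in probability in the $D(B)$-norm by choosing $\delta$ small, thanks again to (\ref{rouen4}).

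\textbf{Tightness of $\{J_n\}$.} Here I would apply the factorization method of Da Prato-Kwapien-Zabczyk: for $\alpha\in(0,1/2)$,
\begin{equation*}
J_n(t) = \frac{\sin\pi\alpha}{\pi}\int_0^t(t-s)^{\alpha-1}T(t-s)Y_\alpha^n(s)\,ds, \qquad Y_\alpha^n(s)=\int_0^s(s-r)^{-\alpha}T(s-r)a_n(u_n(r))\,dW(r).
\end{equation*}
Using (\ref{contr4}), the linear growth of $a_n$ (uniform in $n$), and a Burkholder-type inequality for stochastic convolutions, I would establish $\sup_n\sup_{s\in[0,T]}E\|Y_\alpha^n(s)\|^p<\infty$ for $p$ large enough that $\alpha p>1$. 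The factorization then expresses $J_n$ as the image of $Y_\alpha^n$ under a linear operator $G_\alpha:L^p([0,T],X)\to\mathcal{C}([0,T],D(B))$ which, thanks to the compactness of $T(t)$ on $D(B)$ for $t>0$ and the integrable singularity of the kernel, is compact; tightness of $\{J_n\}$ follows because $\{Y_\alpha^n\}$ is bounded in $L^p$ and therefore mapped to a relatively compact sequence in $\mathcal{C}([0,T],D(B))$.

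\textbf{Main obstacle.} The delicate point is the compact-containment condition in the stronger $D(B)$-norm rather than in $X$, since $u_n$ a priori lies only in a bounded (not compact) subset of $D(B)$. Both the compactness of $T(t)$ on $D(B)$ for $t>0$ (Proposition \ref{analytique}) and the analyticity estimate (\ref{rouen4}) are essential for overcoming this; for $\{J_n\}$, the factorization trick is designed precisely to convert the stochastic convolution into a deterministic integral with a compact-operator-valued kernel, after which compactness transfers from the input space to the output space.
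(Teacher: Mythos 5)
Your route is genuinely different from the paper's. The paper proves the mean-square increment bounds $E\left\vert I_{n}(t)-I_{n}(t^{\prime })\right\vert _{D(B)}^{2}\leq \mathcal{C}\left\vert t-t^{\prime }\right\vert ^{\eta }$ and $E\left\vert J_{n}(t)-J_{n}(t^{\prime })\right\vert _{D(B)}^{2}\leq \mathcal{C}\left\vert t-t^{\prime }\right\vert ^{\eta }$ with $\eta \in \lbrack 0,\frac{1}{2}]$ via the semigroup estimates of Lemmas \ref{A1}--\ref{A2}, upgrades them to pathwise H\"{o}lder bounds through the Garsia--Rodemich--Rumsey lemma, and concludes with a Markov-inequality bound on the probability of leaving a ball of $C^{\overline{\delta }}([0,T],D(B))$, which it treats as compact in $C([0,T],D(B))$ by Ascoli--Arzel\`{a}. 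You instead invoke a Kolmogorov-type criterion (modulus of continuity plus compact containment) and, for $J_{n}$, the Da Prato--Kwapie\'{n}--Zabczyk factorization. Your insistence on verifying compact containment in $D(B)$ is a point where your outline is more careful than the paper: equicontinuity plus boundedness of a $C^{\overline{\delta }}$-ball does not by itself give compactness in $C([0,T],D(B))$ when $D(B)$ is infinite dimensional, and your splitting of $I_{n}(t)$ into $T(\delta )$ applied to a bounded set plus a small remainder is the standard way to supply the missing pointwise precompactness --- though note that Proposition \ref{analytique} only asserts compactness of $T(t)$ on $X$ and boundedness on $D(B)$, so the compactness of $T(\delta )$ as an operator on $D(B)$ (true here by smoothing, $H^{1}\rightarrow H^{2}\hookrightarrow H^{1}$ compactly) would still have to be proved.

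The genuine gap is the mismatch between the moments you have and the moments your criteria require. The available semigroup estimates give increments of order $\left\vert t-s\right\vert ^{\eta }$ with $\eta \leq \frac{1}{2}$ in mean square, so $E\left\vert X_{n}(t)-X_{n}(s)\right\vert _{D(B)}^{2}\leq C\left\vert t-s\right\vert ^{1+\varepsilon }$ is out of reach at $p=2$; indeed your own claim for $I_{n}$ (``exponent strictly greater than $1/2$'') does not meet the hypothesis $1+\varepsilon $ you set up. To run Kolmogorov you must take $p>2$ (Burkholder for $J_{n}$, H\"{o}lder for $I_{n}$) so that $p\eta /2>1$, and likewise the factorization method needs $\alpha p>1$ with $\alpha <\frac{1}{2}$, hence $p>2$. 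Both therefore require $\sup_{n}\sup_{t\in \lbrack 0,T]}E\left\vert u_{n}(t)\right\vert _{D(B)}^{p}<\infty $ for some $p>2$, whereas Proposition \ref{prop1} only delivers second moments. A $p$-th moment version of Proposition \ref{prop1} can be proved by the same fixed-point argument, but it is an additional step your outline does not supply; without it neither the modulus-of-continuity estimate nor the uniform bound on $E\Vert Y_{\alpha }^{n}(s)\Vert ^{p}$ is justified. The paper avoids this at the price of the Garsia--Rodemich--Rumsey machinery, which extracts pathwise H\"{o}lder continuity from an exponent-$\eta $ mean-square bound.
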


\begin{proof}
By Lemma 4, it follows that for each $n\in
\mathbb{N}
,$ the sequence $(a_{n}(.))_{n}$ satisfies the conditions of Proposition \ref%
{prop1}. Hence, there exists a unique mild solution $u_{n}(t),t\in \lbrack
0,T]$ for problem (\ref{approx}) such that $\underset{t\in \lbrack 0,T]}{sup}%
E(\left\vert u_{n}(t)\right\vert _{D(B)}^{2})<\infty .$ For given $0\leq
t^{\prime }\leq t\leq T$ and \ $\eta \in \lbrack 0,\frac{1}{2}],$ it follows
from Lemma $\ref{A1}$ (in Appendix A) that%
\begin{equation*}
\begin{array}{ll}
E\left\vert J_{n}(t)-J_{n}(t^{\prime })\right\vert _{D(B)}^{2} & \leq
2E(\dint\limits_{0}^{t}\left\Vert (T(t-s)-T(t^{\prime
}-s))a_{n}(u_{n}(s))\right\Vert _{2}^{2}ds) \\
& +2E(\dint\limits_{t}^{t^{\prime }}\left\Vert (T(t^{\prime
}-s)a_{n}(u_{n}(s))\right\Vert _{2}^{2}ds) \\
& \leq 2C^{\prime }K^{2}E(\underset{0\leq s\leq T}{\sup }\left\vert
u_{n}(s)\right\vert _{D(B)}^{2})\left\vert t-t^{\prime }\right\vert ^{\eta }
\\
& +2E(\dint\limits_{t}^{t^{\prime }}\left\vert B(T(t^{\prime }-s)\right\vert
_{D(B)}^{2}\left\vert u_{n}(s)g_{G}(u_{n}(s))\right\vert _{D(B)}^{2}ds \\
& \leq \mathcal{C}_{1}\left\vert t-t^{\prime }\right\vert ^{\eta }\text{ .\
\ \ }%
\end{array}%
\end{equation*}

On another hand, by using Lemma $\ref{A2}$ (in Appendix A), we have for
given $0\leq t^{\prime }\leq t\leq T$ and \ $\eta \in \lbrack 0,\frac{1}{2}]$%
\begin{equation*}
\begin{array}{ll}
E\left\vert I_{n}(t)-I_{n}(t^{\prime })\right\vert _{D(B)}^{2} & \leq
2E(\dint\limits_{0}^{t}\left\vert (T(t-s)-T(t^{\prime
}-s))B[u_{n}(s)g_{G}(u_{n}(s))]\right\vert _{D(B)}^{2}ds \\
& +2E(\dint\limits_{t}^{t^{\prime }}\left\vert B(T(t^{\prime
}-s)[u_{n}(s)g_{G}(u_{n}(s))]\right\vert _{D(B)}^{2} \\
& \leq 2E(\dint\limits_{0}^{t}\left\vert B(T(t-s)-T(t^{\prime
}-s))\right\vert _{D(B)}^{2}\left\vert u_{n}(s)g_{G}(u_{n}(s))\right\vert
_{D(B)}^{2}ds \\
& +2E(\dint\limits_{t}^{t^{\prime }}\left\vert B(T(t^{\prime }-s)\right\vert
_{D(B)}^{2}\left\vert u_{n}(s)g_{G}(u_{n}(s))\right\vert _{D(B)}^{2}ds%
\end{array}%
\end{equation*}%
using (\ref{cont}), we obtain%
\begin{equation*}
\begin{array}{ll}
E\left\vert I_{n}(t)-I_{n}(t^{\prime })\right\vert _{D(B)}^{2} & \leq
2C^{\prime \prime }\delta ^{2}E(\underset{0\leq s\leq T}{\sup }\left\vert
u_{n}(s)\right\vert _{D(B)}^{4})\left\vert t-t^{\prime }\right\vert ^{\eta }
\\
& +2C_{1}^{^{\prime \prime }}\delta ^{2}E(\underset{0\leq s\leq T}{\sup }%
\left\vert u_{n}(s)\right\vert _{D(B)}^{4})\left\vert t-t^{\prime
}\right\vert ^{\eta } \\
& \leq \mathcal{C}_{2}\left\vert t-t^{\prime }\right\vert ^{\eta }\text{ .}%
\end{array}%
\end{equation*}%
It follows that
\begin{equation*}
\begin{array}{ll}
E\dint\limits_{0}^{T}\dint\limits_{0}^{T}\left( \dfrac{\left\vert
J_{n}(t)-J_{n}(t^{\prime })\right\vert _{D(B)}}{\left\vert t-t^{\prime
}\right\vert ^{\gamma }}\right) ^{2}dtdt^{\prime } & \leq
\dint\limits_{0}^{T}\dint\limits_{0}^{T}\mathcal{C}_{1}\left\vert
t-t^{\prime }\right\vert ^{\eta -2\gamma }dtdt^{\prime } \\
& <\infty \text{ \ \ \ \ \ \ \ \ \ \ \ \ for all \ }0<\gamma <\frac{\eta +1}{%
2}%
\end{array}%
\end{equation*}%
For $R>0$, we define $A_{R}\subset \Lambda $%
\begin{equation*}
A_{R}=\left\{ \omega \in \Lambda \
/\dint\limits_{0}^{T}\dint\limits_{0}^{T}\left( \frac{\left\vert
J_{n}(t,\omega )-J_{n}(t^{\prime },\omega )\right\vert _{D(B)}}{\left\vert
t-t^{\prime }\right\vert ^{\gamma }}\right) ^{2}dtdt^{\prime }\leq R\right\}
\end{equation*}%
If we let in the Garsia-Rodemich-Rumsey Lemma (Lemma 9, in Appendix A): $%
f=J_{n}(.,w),$ $x=t,$ $y=t^{\prime },$ $p(x-y)=\left\vert x-y\right\vert
^{\gamma }$ $ie$ $\ p(x)=\left\vert x\right\vert ^{\gamma }$ and $\psi
(x)=x^{2},$ then%
\begin{equation*}
\left\Vert J_{n}(t,\omega )-J_{n}(t^{\prime },\omega \right\Vert _{D(B)}\leq
8\dint\limits_{0}^{\left\vert t-t^{\prime }\right\vert }\psi ^{-1}(\frac{4R}{%
u^{2}})dp(u).
\end{equation*}%
Since $\psi ^{-1}(\frac{4R}{u^{2}})=$ $\underset{v^{2}\leq \frac{4R}{u^{2}}}{%
\sup v}$ $=2\dfrac{\sqrt{R}}{u},$ thus%
\begin{equation*}
\begin{array}{ll}
\left\vert J_{n}(t,\omega )-J_{n}(t^{\prime },\omega \right\vert _{D(B)} &
\leq 8\dint\limits_{0}^{\left\vert t-t^{\prime }\right\vert }2\frac{\sqrt{R}%
}{u}\gamma u^{\gamma -1}du \\
& \leq 16\ \ \dfrac{\gamma }{\gamma -1}\ R^{\frac{1}{2}\ }\left\vert
t-t^{\prime }\right\vert ^{\gamma -1}.%
\end{array}%
\end{equation*}%
Finally
\begin{equation*}
\left\vert J_{n}(t,\omega )-J_{n}(t^{\prime },\omega \right\vert
_{D(B)}^{2}\leq M^{\prime }R\left\vert t-t^{\prime }\right\vert ^{2\gamma
-2}.
\end{equation*}%
It follows that for any $0<$ $\overline{\delta }<\gamma $%
\begin{equation*}
\begin{array}{cc}
\underset{t,t^{\prime }\in \lbrack 0,T],t\neq t^{\prime }}{Sup}\dfrac{%
\left\vert J_{n}(t,\omega )-J_{n}(t^{\prime },\omega \right\vert _{D(B)}}{%
\left\vert t-t^{\prime }\right\vert ^{\overline{\delta }}} & \leq \underset{%
t,t^{\prime }\in \lbrack 0,T],t\neq t^{\prime }}{\sup }\mathcal{C}%
_{1}\left\vert t-t^{\prime }\right\vert ^{\gamma -1-\overline{\delta }%
}R^{1/2} \\
& \leq \mathcal{C}_{1}^{\prime }R^{1/2}.%
\end{array}%
\end{equation*}%
Hence, for $\omega \in A_{R},$ we can define%
\begin{equation*}
\begin{array}{ll}
\left\Vert J_{n}(.,\omega )\right\Vert _{C^{\overline{\delta }}([0,T],D(B))}
& =\underset{t\in \lbrack 0,T]}{\sup }\left\vert J_{n}(t,\omega )\right\vert
_{D(B)} \\
& +\underset{t,t^{\prime }\in \lbrack 0,T],t\neq t^{\prime }}{\sup }\dfrac{%
\left\vert J_{n}(t,\omega )-J_{n}(t^{\prime },\omega )\right\vert _{D(B)}}{%
\left\vert t-t^{\prime }\right\vert ^{\overline{\delta }}} \\
& \leq \mathcal{C}^{\prime \prime }R^{1/2}%
\end{array}%
\end{equation*}%
We have then for every $R>0$%
\begin{equation*}
\begin{array}{ll}
P(A_{R}) & =P\left\{ \omega \in \Lambda \
/\dint\limits_{0}^{T}\dint\limits_{0}^{T}\left( \dfrac{\left\vert
J_{n}(t)-J_{n}(t^{\prime })\right\vert _{D(B)}}{\left\vert t-t^{\prime
}\right\vert ^{\gamma }}\right) ^{2}dtdt^{\prime }\leq R\right\} \\
& \leq P\left\{ \left\Vert J_{n}(.,\omega )\right\Vert _{C^{\overline{\delta
}}([0,T],D(B))}\leq \mathcal{C}^{\prime \prime }R^{1/2}\right\}%
\end{array}%
\end{equation*}%
\newline
Let us now show that $\{J_{n}(t),t\in \lbrack 0,T]\}_{n\in
\mathbb{N}
}$ is tight on $C([0,T],D(B)).$ For $R>0$, we define%
\begin{equation*}
B_{R}=\left\{ f\in C([0,T],D(B))/\left\Vert f\right\Vert _{C^{\overline{%
\delta }}([0,T],D(B))}\leq R\right\} .
\end{equation*}%
$B_{R}$ is a compact set of $C([0,T],D(B))$ (see the Proof B1 in Appendix B)
and we have the following:
\begin{equation*}
\begin{array}{ll}
\underset{n}{\sup }P[J_{n}\in \overline{B}_{R}] & =\underset{n}{\sup }%
P[\left\Vert J_{n}(.,\omega \right\Vert _{C^{\overline{\delta }%
}([0,T],D(B))}>R] \\
& \leq \underset{n}{\text{ }\sup }P\left[ \overline{A}_{\dfrac{_{R^{2}}}{%
\mathcal{C}^{^{\prime \prime }2}}}\right] \\
& \leq \underset{n}{\text{ }\sup }P\left\{
\dint\limits_{0}^{T}\dint\limits_{0}^{T}\left( \dfrac{J\left\vert
_{n}(t)-J_{n}(t^{\prime })\right\vert _{D(B)}}{\left\vert t-t^{\prime
}\right\vert ^{\gamma }}\right) ^{2}dtdt^{\prime }>\dfrac{R^{2}}{\mathcal{C}%
^{^{\prime \prime }2}}\right\}%
\end{array}%
\end{equation*}%
Using Markov inequality, we obtain%
\begin{equation}
\underset{n}{\sup }P[J_{n}\in \overline{B}_{R}]\leq \dfrac{\mathcal{C}%
^{^{\prime \prime }2}}{R^{2}}\underset{n}{\text{ }\sup }E\left\{
\dint\limits_{0}^{T}\dint\limits_{0}^{T}\left( \frac{\left\vert
J_{n}(t)-J_{n}(t^{\prime })\right\vert _{D(B)}}{\left\vert t-t^{\prime
}\right\vert ^{\gamma }}\right) ^{2}dtdt^{\prime }\right\} .  \label{nad}
\end{equation}%
\ Since $E\left\{ \dint\limits_{0}^{T}\dint\limits_{0}^{T}\left( \dfrac{%
\left\vert J_{n}(t)-J_{n}(t^{\prime })\right\vert _{D(B)}}{\left\vert
t-t^{\prime }\right\vert ^{\gamma }}\right) ^{2}dtdt^{\prime }\right\}
<+\infty ,$ (\ref{nad}) leads to%
\begin{equation*}
\underset{n}{\sup }P[J_{n}\in \overline{B}_{R}]\leq \dfrac{M^{^{\prime
\prime }}}{R^{2}}\text{ \ }<\epsilon \text{\ }
\end{equation*}%
for $R$ large enough. This proves the tightness of $\{J_{n}(t),t\in \lbrack
0,T]\}_{n\in
\mathbb{N}
}.$ A similar argument implies that $\{I_{n}(t),t\in \lbrack 0,T]\}_{n\in
\mathbb{N}
}$ is also tight on the same space.
\end{proof}

Now, we are able to state our main result

\begin{theorem}
Let the initial condition $u_{0}\in D(B)$ be a continuous deterministic
mapping on $\Omega .$ Then, there exists a probability basis $(\overline{%
\Lambda },\overline{\mathcal{F}},(\overline{\mathcal{F}_{t}})_{t\in \lbrack
0,T]},\overline{P})$ on which there is a cylindrical Wiener process $(%
\overline{W}(t))_{t\in \lbrack 0,T]}$ and a mild solution $(u(t))_{t\in
\lbrack 0,T]}$ of (\ref{6}) in $\mathcal{C(}[0,T],D(B))$.
\end{theorem}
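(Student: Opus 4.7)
The plan is to combine the tightness results of the previous proposition with Prokhorov's theorem and the Skorohod representation theorem, and then to pass to the limit in the approximating equation \eqref{approx} to produce a mild solution on a new probability basis. Since the noise coefficient $\sqrt{\lambda u_{+}}$ is non-Lipschitz, strong existence is out of reach, so the weak (martingale) formulation is the natural framework.

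Concretely, since $I_{0}(t)=T(t)u_{0}$ is a fixed deterministic continuous curve in $D(B)$ (by Proposition \ref{analytique}) and $\{I_{n}\}_{n}$, $\{J_{n}\}_{n}$ are tight in $\mathcal{C}([0,T],D(B))$, the sequence $\{u_{n}\}_{n}=\{I_{0}-I_{n}+J_{n}\}_{n}$ is also tight in $\mathcal{C}([0,T],D(B))$. I would then consider the joint laws of $(u_{n},W)$ on $\mathcal{C}([0,T],D(B))\times \mathcal{C}([0,T],X_{0})$, where $X_{0}$ is a Hilbert space into which $D(B)$ embeds Hilbert--Schmidt so that $W$ can be realized as a genuine continuous path. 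Prokhorov's theorem yields weak convergence along a subsequence, and Skorohod's representation theorem then provides a new probability basis $(\overline{\Lambda },\overline{\mathcal{F}},\overline{P})$ carrying copies $(\overline{u}_{n},\overline{W}_{n})$ of $(u_{n},W)$ that converge $\overline{P}$-a.s.\ in path space to a limit $(\overline{u},\overline{W})$. A standard L\'{e}vy-characterization argument shows that $\overline{W}$ is again a cylindrical Wiener process on $D(B)$ with values in $X$, adapted to the filtration $\overline{\mathcal{F}}_{t}=\sigma (\overline{u}(s),\overline{W}(s):s\leq t)$, and that each $\overline{u}_{n}$ satisfies the approximating equation \eqref{approx} driven by $\overline{W}_{n}$ on the new probability space.

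It remains to pass to the limit in each term of \eqref{approx}. The deterministic term $T(t)u_{0}$ is unchanged. For the chemotaxis integral $\int_{0}^{t}T(t-s)B[\overline{u}_{n}(s)g_{G}(\overline{u}_{n}(s))]\,ds$, the almost-sure convergence $\overline{u}_{n}\rightarrow \overline{u}$ in $\mathcal{C}([0,T],D(B))$ combined with Lemma \ref{prin}(1) gives pointwise convergence of the integrand in $X$; the uniform bound $\sup_{n}\sup_{t}E(|\overline{u}_{n}(t)|_{D(B)}^{2})<\infty $ obtained from Proposition \ref{prop1}, together with Lemma \ref{prin}(2), provides the domination needed to conclude by dominated convergence. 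The main obstacle will be the passage to the limit in the stochastic integral $\int_{0}^{t}T(t-s)a_{n}(\overline{u}_{n}(s))\,d\overline{W}_{n}(s)$: the limiting coefficient is non-Lipschitz and the driving noise itself depends on $n$. The key ingredients will be that $a_{n}\rightarrow \sqrt{\lambda \cdot _{+}}$ uniformly on $\mathbb{R}$, so that $a_{n}(\overline{u}_{n})\rightarrow C(\overline{u})$ in $L_{HS}(D(B),X)$ almost surely, combined with the uniform second-moment bound on $\overline{u}_{n}$; one then invokes a classical stability result for stochastic integrals under joint a.s.\ convergence of integrands and driving Wiener processes (in the spirit of the weak-existence arguments of Bensoussan and of Da Prato--Zabczyk) to identify the limit as $\int_{0}^{t}T(t-s)C(\overline{u}(s))\,d\overline{W}(s)$. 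This identifies $\overline{u}$ as a mild solution of \eqref{6} on $(\overline{\Lambda },\overline{\mathcal{F}},(\overline{\mathcal{F}}_{t})_{t\in \lbrack 0,T]},\overline{P})$ driven by $\overline{W}$, completing the proof.
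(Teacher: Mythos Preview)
Your approach is correct but takes a genuinely different route from the paper's. The paper does \emph{not} carry the Wiener process through the Skorohod representation and does not pass to the limit directly in the mild equation. Instead, it applies Skorohod only to the sequence $\{u_{n}\}$, switches to the \emph{weakened} formulation \eqref{wkned}, and runs a martingale-problem argument: it sets
\[
M_{N}(t)=u_{N}(t)-u_{0}-A\Bigl(\int_{0}^{t}u_{N}(s)\,ds\Bigr)+\int_{0}^{t}B\bigl[u_{N}(s)g_{G}(u_{N}(s))\bigr]\,ds,
\]
notes that $M_{N}$ is a square-integrable martingale with quadratic variation $\int_{0}^{t}a_{N}(u_{N}(s))a_{N}^{\ast}(u_{N}(s))\,ds$, transfers both properties to $\widetilde{M}_{N}$ on the new space by equality in law, and passes to the limit by uniform integrability. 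The limit $M$ is a martingale with quadratic variation $\int_{0}^{t}\lambda u^{+}(s)\,ds$, and the cylindrical Wiener process $\overline{W}$ is then \emph{constructed a posteriori} via the martingale representation theorem in \cite{Da Prato}, yielding $M(t)=\int_{0}^{t}\sqrt{\lambda u^{+}(s)}\,d\overline{W}(s)$ and hence the weakened (equivalently mild) equation for $u$.

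Your direct method trades the representation theorem for two other ingredients: a stability result for the stochastic convolution $\int_{0}^{t}T(t-s)a_{n}(\overline{u}_{n}(s))\,d\overline{W}_{n}(s)$ under simultaneous a.s.\ convergence of the integrand and of the driving process, and the (standard but not automatic) transfer of the mild equation to $(\overline{u}_{n},\overline{W}_{n})$ from equality in law with $(u_{n},W)$. Both are available in the Bensoussan/Da Prato--Zabczyk framework you cite, so your argument goes through. The paper's martingale-problem route sidesteps both issues entirely---no convergence of stochastic integrals is needed and the noise is manufactured from the limit---which is arguably cleaner here; your route is more constructive in that the Wiener process is explicitly tracked along the approximation rather than rebuilt at the end.
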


\begin{proof}
The tightness of $\{I_{n}(t),t\in \lbrack 0,T]\}_{n\in
\mathbb{N}
}$ and $\{J_{n}(t),t\in \lbrack 0,T]\}_{n\in
\mathbb{N}
}$ implies that $\{u_{n}(t),t\in \lbrack 0,T]\}_{n\in
\mathbb{N}
}$ converges weakly on $C([0,T],D(B)),$ this means that there exists a
subsequence $\{N_{k}\}_{k\in
\mathbb{N}
}\subset
\mathbb{N}
$ and a $C([0,T],D(B))$-valued random variable $v$ such that
\begin{equation*}
u_{N_{k}}\rightarrow v\text{ \ on }C([0,T],D(B)),\text{ as }k\rightarrow
+\infty .
\end{equation*}%
By Skorohod Representation Theorem, there exists a probability space $(%
\overline{\Lambda },\overline{\mathcal{F}},\overline{P})$ $\ $with a
filtration $(\overline{\mathcal{F}_{t}})_{t\in \lbrack 0,T]}$ and $%
C([0,T],D(B))$-valued random variables $(\widetilde{u}_{N})_{N\in
\mathbb{N}
}$ and $u$ such that as $N\rightarrow +\infty $%
\begin{equation*}
\widetilde{u}_{N}\rightarrow u\text{ \ \ \ \ \ \ \ \ }\overline{P}-a.s\text{
on }C([0,T],D(B))
\end{equation*}%
and
\begin{equation*}
\widetilde{u}_{N}\ \overset{in\text{ }law}{=u}_{N}\text{ \ and \ }\overset{in%
\text{ }law}{u=v}.
\end{equation*}%
The mild solution $u_{N}(.)$ of (\ref{approx}) is also a weakened solution
of (\ref{approx}) (see for instance \cite{CH}), that is $u_{N}(.)$ satisfies
the following integral equation
\begin{equation*}
\begin{array}{ll}
u_{N}(t) & =u_{0}+A(\dint\limits_{0}^{t}u_{N}(s)ds)-\dint\limits_{0}^{t}B
\left[ u_{N}(s)g_{G}\left( u_{N}(s)\right) \right] ds+\dint%
\limits_{0}^{t}a_{N}(u_{N}(s))dW(s).%
\end{array}%
\end{equation*}%
Hence, for each $N$,%
\begin{equation*}
\begin{array}{ll}
M_{N}(t) & =\dint\limits_{0}^{t}a_{N}(u_{N}(s))dW(s) \\
& =u_{N}(t)-u_{0}-A(\dint\limits_{0}^{t}u_{N}(s)ds)+\dint\limits_{0}^{t}B
\left[ u_{N}(s)g_{G}\left( u_{N}(s)\right) \right] ds%
\end{array}%
\end{equation*}%
is a square integrable martingale with respect to $\mathcal{F}%
_{t}^{N}=\sigma \{u_{N}(s),s\leq t\}$ and has the following quadratic
variation%
\begin{equation}
\langle M_{N}(t),M_{N}(t)\rangle
=\dint\limits_{0}^{t}a_{N}(u_{N}(s))a_{N}^{\ast }(u_{N}(s))ds.  \label{qv}
\end{equation}%
Since $\pounds (\widetilde{u}_{N})=\pounds (u_{N})$ ($\pounds $ denotes the
probability distribution)$,$%
\begin{equation*}
\widetilde{M}_{N}(t)=\widetilde{u}_{N}(t)-u_{0}-A(\dint\limits_{0}^{t}%
\widetilde{u}_{N}(s)ds)+\dint\limits_{0}^{t}B\left[ \widetilde{u}%
_{N}(s)g_{G}\left( \widetilde{u}_{N}(s)\right) \right] ds
\end{equation*}%
has the same distribution as $M_{N}(t)$ and hence
\begin{equation}
\overline{E}\left\vert \widetilde{M}_{N}(t)\right\vert
_{D(B)}^{2}=E\left\vert M_{N}(t)\right\vert _{D(B)}^{2}<+\infty
\label{samelaw}
\end{equation}%
(we denote by $\overline{E}$ the expectation with respect to the probability
measure $\overline{P}$)$.$ \newline
On another hand, since $\{M_{N}(t)\}_{t}$ is a martingale, that is
\begin{equation*}
E(M_{N}(t)-M_{N}(s)\text{ }/\mathcal{F}_{s}^{N})=0,\text{ \ }\forall \text{ }%
0\leq s<t\leq T
\end{equation*}%
and using the fact that

\begin{equation*}
M_{N}(t)-M_{N}(s)=u_{N}(t)-u_{N}(s)-A(\int\limits_{s}^{t}u_{N}(s)ds)+\dint%
\limits_{s}^{t}B\left[ u_{N}(s)g_{G}\left( u_{N}(s)\right) \right] ds
\end{equation*}%
has the same law as

\begin{equation*}
\widetilde{M}_{N}(t)-\widetilde{M}_{N}(s)=\widetilde{u}_{N}(t)-\widetilde{u}%
_{N}(s)-A(\int\limits_{s}^{t}\widetilde{u}_{N}(s)ds)+\dint\limits_{s}^{t}B%
\left[ \widetilde{u}_{N}(s)g_{G}\left( \widetilde{u}_{N}(s)\right) \right]
ds,
\end{equation*}%
it holds that
\begin{equation*}
\overline{E}(\widetilde{M}_{N}(t)-\widetilde{M}_{N}(s)\text{ }/\overline{%
\mathcal{F}}_{s}^{N})=0,\text{ \ }\forall \text{ }0\leq s<t\leq T
\end{equation*}%
which implies that $\{\widetilde{M}_{N}(t)\}_{t}$ is a\ square integrable
martingale on $(\overline{\Lambda },\overline{\mathcal{F}},\overline{P})$
with respect to $\overline{\mathcal{F}}_{t}^{N}=\sigma \{\widetilde{u}%
_{N}(s),s\leq t\}.$ \newline
On another hand, since $M_{N}(t)$ has the quadratic variation (\ref{qv}),
this means that for $x,y$ $\in D(B),$
\begin{equation*}
\langle M_{N}(t),x\rangle \langle M_{N}(t),y\rangle
-\dint\limits_{0}^{t}\langle a_{N}(u_{N}(z)),x\rangle \langle
a_{N}(u_{N}(z)),y\rangle dz\text{ is an }\mathcal{F}_{t}^{N}-\text{%
martingale.}
\end{equation*}%
Then%
\begin{equation*}
\begin{array}{c}
E(\langle M_{N}(t),x\rangle \langle M_{N}(t),y\rangle -\langle
M_{N}(s),x\rangle \langle M_{N}(s),y\rangle \\
-\dint\limits_{0}^{t}\langle a_{N}(u_{N}(z)),x\rangle \langle
a_{N}(u_{N}(z)),y\rangle dz+\dint\limits_{0}^{s}\langle
a_{N}(u_{N}(z)),x\rangle \langle a_{N}(u_{N}(z)),y\rangle dz\text{ }/%
\mathcal{F}_{s}^{N})=0.%
\end{array}%
\end{equation*}%
It follows that%
\begin{equation*}
\begin{array}{lc}
\overline{E}(\langle \widetilde{M}_{N}(t),x\rangle \langle \widetilde{M}%
_{N}(t),y\rangle -\langle \widetilde{M}_{N}(s),x\rangle \langle \widetilde{M}%
_{N}(s),y\rangle &  \\
-\dint\limits_{0}^{t}\langle a_{N}(\widetilde{u}_{N}(z)),x\rangle \langle
a_{N}(\widetilde{u}_{N}(z)),y\rangle dz+\dint\limits_{0}^{s}\langle a_{N}(%
\widetilde{u}_{N}(z)),x\rangle \langle a_{N}(\widetilde{u}_{N}(z)),y\rangle
dz\text{ }/\overline{\mathcal{F}}_{s}^{N})=0. &
\end{array}%
\end{equation*}
\newline
Hence $\langle \widetilde{M}_{N}(t),x\rangle \langle \widetilde{M}%
_{N}(t),y\rangle -\dint\limits_{0}^{t}\langle a_{N}(\widetilde{u}%
_{N}(z)),x\rangle \langle a_{N}(\widetilde{u}_{N}(z)),y\rangle dz$ is an $%
\overline{\mathcal{F}}_{s}^{N}-$martingale and as a consequence, $\widetilde{%
M}_{N}(t)$ has the unique quadratic variation process
\begin{equation*}
\langle \widetilde{M}_{N}(t),\widetilde{M}_{N}(t)\rangle
=\dint\limits_{0}^{t}a_{N}(\widetilde{u}_{N}(s))a_{N}^{\ast }(\widetilde{u}%
_{N}(s))ds.
\end{equation*}%
From (\ref{samelaw}), since

\begin{equation*}
\forall N\in
\mathbb{N}
,\underset{0\leq t\leq T}{\sup }\overline{E}\left\vert \widetilde{M}%
_{N}(t)\right\vert _{D(B)}^{2}=\underset{0\leq t\leq T}{\sup }E\left\vert
M_{N}(t)\right\vert _{D(B)}^{2}<+\infty ,
\end{equation*}%
this means that $(\widetilde{M}_{N})_{N\in
\mathbb{N}
}$ is a sequence of uniformly integrable martingales. Therefore, there
exists on $(\overline{\Lambda },\overline{\mathcal{F}},\overline{P})$ a
square integrable martingale $\widetilde{M}$ such that for $N\rightarrow
+\infty ,\quad \widetilde{M}_{N}\rightarrow M$\ and%
\begin{equation*}
M(t)=u(t)-u_{0}-A(\dint\limits_{0}^{t}u(s)ds)+\dint\limits_{0}^{t}B\left[
u(s)g_{G}\left( u(s)\right) \right] ds
\end{equation*}%
with%
\begin{equation*}
\langle M(t),M(t)\rangle =\lim_{N\rightarrow +\infty
}\dint\limits_{0}^{t}a_{N}(\widetilde{u}_{N}(s))a_{N}^{\ast }(\widetilde{u}%
_{N}(s))ds=\dint\limits_{0}^{t}\lambda u^{+}(s)ds.
\end{equation*}%
By the representation theorem for square integrable martingales \cite{Da
Prato}, there exists a cylindrical Wiener process $\overline{W}$ defined on $%
(\overline{\Lambda },\overline{\mathcal{F}},\overline{\mathcal{F}}_{t\in
\lbrack 0,T]},\overline{P})$ such that
\begin{equation*}
M(t)=\dint\limits_{0}^{t}\sqrt{\lambda u^{+}(s)}d\overline{W}(s),\text{ \ \ }%
t\in \lbrack 0,T].
\end{equation*}%
Then $u$ satisfies that
\begin{equation*}
u(t)=u_{0}+A(\dint\limits_{0}^{t}u(s)ds)-\dint\limits_{0}^{t}B\left[
u(s)g_{G}\left( u(s)\right) \right] ds+\dint\limits_{0}^{t}\sqrt{\lambda
u^{+}(s)}d\overline{W}(s),\text{ \ \ }t\in \lbrack 0,T]
\end{equation*}%
or equivalently
\begin{equation*}
\begin{array}{c}
u(t)=T(t)u_{0}-\dint_{0}^{t}T(t-s)B\left[ u(s)g_{G}\left( u(s)\right) \right]
ds \\
+\dint_{0}^{t}T(t-s)C\left( u(s)\right) d\overline{W}(s).%
\end{array}%
\end{equation*}
\end{proof}

\bigskip \textbf{Appendix A}

\begin{lemma}
\label{A1} For $\eta \in \lbrack 0,\frac{1}{2}],$ there exists constants $%
C^{\prime },C_{1}^{\prime }>0,$ such that%
\begin{equation}
\dint\limits_{0}^{s}\left\vert T(t-u)-T(s-u)\right\vert _{D(B)}^{2}du\leq
C^{\prime }\left\vert t-s\right\vert ^{\eta }\text{ \ \ \ \ \ \ }  \label{1}
\end{equation}%
and%
\begin{equation}
\dint\limits_{s}^{t}\left\vert T(t-u)\right\vert _{D(B)}^{2}du\leq
C_{1}^{\prime }\left\vert t-s\right\vert ^{\eta }\text{ }  \label{2}
\end{equation}%
with$\ $\ $0\leq s<t\leq T.$

\begin{proof}
We represent $T(t)$ in terms of the eigenvalues of $A=D\dfrac{d^{2}}{dx^{2}}$
denoted $-w_{j}^{2}$ to which correspond the eigenvectors $\varphi _{j}:$%
\newline
\begin{equation*}
T(t)=\dsum\limits_{j=1}^{\infty }\exp \{-w_{j}^{2}t\}\left\langle .,\varphi
_{j}\right\rangle \varphi _{j}
\end{equation*}%
Then, we have%
\begin{equation*}
\begin{array}{l}
\dint\limits_{0}^{s}\left\vert T(t-u)-T(s-u)\right\vert _{D(B)}^{2}du \\
\leq \dint\limits_{0}^{s}\dsum\limits_{j=1}^{\infty }\left\vert \exp
\{-w_{j}^{2}(t-u)\}-\exp \{-w_{j}^{2}(s-u)\}\right\vert
_{D(B)}^{2}\left\vert \left\langle .,\varphi _{j}\right\rangle \right\vert
_{D(B)}^{2}\left\vert \varphi _{j}\right\vert _{D(B)}^{2}du \\
\leq \dint\limits_{0}^{s}\dsum\limits_{j=1}^{\infty }\left\vert \exp
\{-w_{j}^{2}(s-u)\}\right\vert ^{2}\left\vert 1-\exp
\{-w_{j}^{2}(t-s)\}\right\vert ^{2}\left\vert \left\langle .,\varphi
_{j}\right\rangle \right\vert _{D(B)}^{2}\left\vert \varphi _{j}\right\vert
_{D(B)}^{2}du \\
\leq \dsum\limits_{j=1}^{\infty }\left\vert \left\langle .,\varphi
_{j}\right\rangle \right\vert _{D(B)}^{2}\left\vert \varphi _{j}\right\vert
_{D(B)}^{2}\left\vert w_{j}^{2}(t-s)\right\vert ^{2\lambda
}\dint\limits_{0}^{s}\exp \{-2w_{j}^{2}(s-u)\}du \\
\leq \underset{j}{\text{ }\sup }\left\vert \left\langle .,\varphi
_{j}\right\rangle \right\vert _{D(B)}^{2}\underset{j}{\sup }\left\vert
\varphi _{j}\right\vert _{D(B)}^{2}\left\vert t-s)\right\vert ^{2\lambda
}\dsum\limits_{j=1}^{\infty }w_{j}^{-(2-4\lambda )} \\
\leq C_{0}\left\vert t-s)\right\vert ^{2\lambda }\dsum\limits_{j=1}^{\infty
}w_{j}^{-(2-4\lambda )}%
\end{array}%
\end{equation*}%
where we have used the fact that for all $u\in \lbrack 0,1],$
\begin{equation*}
1-\exp \{-x\}\leq x^{u}\text{ \ \ }(x>0).
\end{equation*}%
For $\lambda \in \lbrack 0,\frac{1}{4}],$ we have $\dsum\limits_{j=1}^{%
\infty }w_{j}^{-(2-4\lambda )}<\infty .$ If we set $\eta =2\lambda $ $(\eta
\in \lbrack 0,\frac{1}{2}]),$ hence%
\begin{equation*}
\dint\limits_{0}^{s}\left\vert T(t-u)-T(s-u)\right\vert _{D(B)}^{2}du\leq
C^{\prime }\left\vert t-s)\right\vert ^{\eta }
\end{equation*}%
Similarly, for the same $\eta \in \lbrack 0,\frac{1}{2}]$ as above, we have%
\begin{equation*}
\begin{array}{l}
\dint\limits_{s}^{t}\left\vert T(t-u)\right\vert _{D(B)}^{2}du\leq
\dsum\limits_{j=1}^{\infty }\left\vert \left\langle .,\varphi
_{j}\right\rangle \right\vert _{D(B)}^{2}\left\vert \varphi _{j}\right\vert
_{D(B)}^{2}\ \dint\limits_{s}^{t}\exp \{-2w_{j}^{2}(t-u)\}du\  \\
\leq \dfrac{1}{2}\underset{j}{\sup }\left\vert \left\langle .,\varphi
_{j}\right\rangle \right\vert _{D(B)}^{2}\underset{j}{\sup }\left\vert
\varphi _{j}\right\vert _{D(B)}^{2}\dsum\limits_{j=1}^{\infty
}w_{j}^{-2}(1-\exp \{-2w_{j}^{2}(t-s)\}) \\
\leq C_{0}^{\prime }\left\vert t-s\right\vert ^{\eta
}\dsum\limits_{j=1}^{\infty }w_{j}^{-(2-2\eta )} \\
\leq C_{1}^{\prime }\left\vert t-s)\right\vert ^{\eta }%
\end{array}%
\end{equation*}
\end{proof}
\end{lemma}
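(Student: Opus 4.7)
The plan is to exploit the spectral structure of the generator $A$. Under Neumann boundary conditions on $(0,L)$, the self-adjoint operator $A = D\,\partial_{xx}$ admits a complete orthonormal eigenbasis $\{\varphi_j\}_{j \geq 1}$ in $X = L^2(\Omega)$ with eigenvalues $-w_j^2$ satisfying $w_j \sim cj$ as $j \to \infty$. The semigroup then factors as $T(t) = \sum_{j} e^{-w_j^2 t}\,\langle\cdot, \varphi_j\rangle\,\varphi_j$, and both integrated estimates reduce to scalar control of $e^{-w_j^2 \tau}$ summed against appropriate spectral weights. The key analytic tool throughout is the elementary inequality $1 - e^{-x} \leq x^\alpha$, valid for every $x \geq 0$ and $\alpha \in [0,1]$, which converts the spectral-gap structure into the required H\"older decay in the time increment $|t-s|$.

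For the first estimate, I would write the difference as
\begin{equation*}
T(t-u) - T(s-u) = \sum_{j=1}^\infty e^{-w_j^2(s-u)}\bigl(1 - e^{-w_j^2(t-s)}\bigr)\,\langle \cdot, \varphi_j\rangle\, \varphi_j,
\end{equation*}
apply the inequality with $\alpha = 2\lambda$ to pull $(t-s)^{2\lambda}$ outside and leave a mode-dependent factor $w_j^{4\lambda}$, then take the $D(B)$-squared norm via the Parseval/Bessel expansion. Integrating the surviving exponential $e^{-2w_j^2(s-u)}$ over $u \in [0,s]$ yields at most $(2w_j^2)^{-1}$ per mode, so the whole quantity is dominated by $C\,(t-s)^{2\lambda}\sum_j w_j^{4\lambda - 2}\,c_j$, where $c_j$ collects the spectral weights of $\varphi_j$. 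Taking $\lambda \in [0,1/4)$ makes the series converge (by Weyl $w_j \sim j$, the exponent $4\lambda - 2 < -1$), and setting $\eta = 2\lambda$ produces the advertised H\"older range $\eta \in [0,1/2]$.

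For the second estimate the same spectral representation reduces the problem to estimating $\int_s^t e^{-2w_j^2(t-u)}\,du = (1 - e^{-2w_j^2(t-s)})/(2w_j^2)$. A second application of $1 - e^{-x} \leq x^\eta$ bounds this by $C\,w_j^{-2(1-\eta)}(t-s)^\eta$, and summation over $j$ of $w_j^{-2(1-\eta)}$ is convergent precisely in the same range $\eta < 1/2$, giving the required $C_1'(t-s)^\eta$ bound.

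The main obstacle will be making rigorous sense of the notation $|T(\cdot)|_{D(B)}$ so that the Parseval-style expansion is legitimate. Since $|\varphi_j|_{D(B)}^2 = \|\varphi_j\|^2 + \|B\varphi_j\|^2$ grows like $w_j^2$ for the Neumann basis, a careless accounting would introduce an extra $w_j^2$ into the series and destroy summability. The resolution is either to interpret the quantity as a Hilbert-Schmidt-type norm computed against the $D(B)$-orthonormalised basis $\widetilde\varphi_j = \varphi_j/|\varphi_j|_{D(B)}$ (on which $T(t)\widetilde\varphi_j = e^{-w_j^2 t}\widetilde\varphi_j$, so spectral weights cancel), or to track the way this quantity is actually used downstream when paired with $|u_n(s)g_G(u_n(s))|_{D(B)}^2$. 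Once this interpretation is fixed, the convergent series $\sum_j w_j^{4\lambda-2}$ delivers both bounds, with the one-dimensional Weyl asymptotic forcing the restriction $\eta < 1/2$ (the endpoint being borderline and requiring at most a logarithmic refinement, which is not needed in the application).
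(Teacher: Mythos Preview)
Your approach is essentially identical to the paper's: spectral decomposition of $T(t)$ via the Neumann eigenbasis, the factorisation $e^{-w_j^2(s-u)}(1-e^{-w_j^2(t-s)})$, the inequality $1-e^{-x}\le x^{\alpha}$ with $\alpha=2\lambda$, integration of $e^{-2w_j^2(s-u)}$ over $u$ to extract $w_j^{-2}$, and summability of $\sum_j w_j^{-(2-4\lambda)}$ for $\lambda\in[0,1/4]$ with $\eta=2\lambda$; the second estimate is handled the same way in both. Your caution about the growth of $|\varphi_j|_{D(B)}^2$ is well-placed---the paper simply writes $\sup_j|\varphi_j|_{D(B)}^2$ as part of a finite constant $C_0$ without further comment, so your proposed renormalisation to a $D(B)$-orthonormal basis is, if anything, more careful than the original argument.
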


\begin{lemma}
\textbf{\label{A2}}\bigskip\ For $\eta \in \lbrack 0,\frac{1}{2}],$ there
exists constants $C^{\prime \prime },C_{1}^{^{\prime \prime }}>0,$such that%
\begin{equation}
\dint\limits_{0}^{s}\left\vert B\left[ T(t-u)-T(s-u)\right] \right\vert
_{D(B)}^{2}du\leq C^{\prime \prime }\left\vert t-s\right\vert ^{\eta }\text{
\ \ \ \ \ \ }
\end{equation}%
\begin{equation}
\dint\limits_{s}^{t}\left\vert BT(t-u)\right\vert _{D(B)}^{2}du\leq
C_{1}^{^{\prime \prime }}\left\vert t-s\right\vert ^{\eta }\text{ }
\end{equation}%
with$\ 0\leq s<t\leq T.$
\end{lemma}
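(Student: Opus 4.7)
The plan is to adapt the spectral-expansion argument of Lemma~\ref{A1} by carrying the operator $B=d/dx$ through each step. The Neumann Laplacian $A=D\,d^{2}/dx^{2}$ on $\Omega=(0,L)$ admits eigenpairs $(-w_{j}^{2},\varphi_{j})$ with $\varphi_{j}(x)\propto\cos(j\pi x/L)$ and $w_{j}\sim j$, so $B\varphi_{j}$ is, up to a sign, proportional to $w_{j}\sin(j\pi x/L)$. The decomposition $T(t)=\sum_{j}e^{-w_{j}^{2}t}\langle\cdot,\varphi_{j}\rangle\varphi_{j}$ yields
\[
B[T(t-u)-T(s-u)]=\sum_{j}e^{-w_{j}^{2}(s-u)}\bigl(1-e^{-w_{j}^{2}(t-s)}\bigr)\langle\cdot,\varphi_{j}\rangle B\varphi_{j},
\]
which is the object whose $D(B)$--Hilbert--Schmidt type norm has to be estimated.

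For the first inequality I would apply the elementary bound $1-e^{-x}\le x^{\mu}$ (valid for $x\ge 0$, $\mu\in[0,1]$) to the middle factor, square, and integrate in $u$ using $\int_{0}^{s}e^{-2w_{j}^{2}(s-u)}du\le(2w_{j}^{2})^{-1}$; then the $D(B)$-factors attached to $\langle\cdot,\varphi_{j}\rangle$ and $B\varphi_{j}$ are absorbed into a supremum exactly as in Lemma~\ref{A1}, with the summability of the residual series guaranteed by the Hilbert--Schmidt embedding hypothesis (\ref{cond}). This leaves a power bound of the form $|t-s|^{2\mu}\sum_{j}w_{j}^{-\alpha(\mu)}$ for some $\alpha(\mu)$ controlled by $\mu$; setting $\eta=2\mu$ and restricting $\mu$ so that $\alpha(\mu)>1$ delivers the first estimate on the range $\eta\in[0,1/2]$. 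The second inequality is derived along the same lines but is more direct, since only the interval $[s,t]$ enters: one computes $\int_{s}^{t}e^{-2w_{j}^{2}(t-u)}du\le(2w_{j}^{2})^{-1}\bigl(1-e^{-2w_{j}^{2}(t-s)}\bigr)$ and reuses $1-e^{-x}\le x^{\eta}$ once more to extract the factor $|t-s|^{\eta}$.

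The main obstacle is controlling the absolute summability of the eigenseries after $B$ has been applied. Compared with the calculation in Lemma~\ref{A1}, every term acquires an additional factor $\|B\varphi_{j}\|^{2}\sim w_{j}^{2}$, which must be compensated by the $(2w_{j}^{2})^{-1}$ gained from the time integral together with the decay furnished by the compact Hilbert--Schmidt embedding $J:D(B)\hookrightarrow X$. Pinning down the sharp admissible $\mu$ so that the full range $\eta\in[0,1/2]$ claimed in the statement is attained is the only delicate point; exploiting the one-dimensionality of $\Omega$ (which gives the precise asymptotics $w_{j}\sim j$) is essential here. Away from that bookkeeping, each step is a direct parallel of what has already been carried out in the proof of Lemma~\ref{A1}.
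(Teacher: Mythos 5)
Your proposal follows essentially the same route as the paper: the spectral expansion $T(t)=\sum_j e^{-w_j^2 t}\langle\cdot,\varphi_j\rangle\varphi_j$ with $B$ carried onto the eigenfunctions, the bound $1-e^{-x}\le x^{\mu}$, the integral $\int e^{-2w_j^2(\cdot)}\,du\le (2w_j^2)^{-1}$, and the identification $\eta=2\mu$ with a summability restriction on $\mu$. The only divergence is that the "delicate bookkeeping" you flag for the extra $\|B\varphi_j\|$ growth is not actually carried out in the paper, which simply absorbs $\sup_j|B\varphi_j|_{D(B)}^2$ into a constant and requires $\sum_j w_j^{-(2-4\lambda)}<\infty$ exactly as in Lemma~\ref{A1}.
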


\begin{proof}
From the representation%
\begin{equation*}
T(t)=\dsum\limits_{j=1}^{\infty }\exp \{-w_{j}^{2}t\}\left\langle .,\varphi
_{j}\right\rangle \varphi _{j}.
\end{equation*}%
we obtain
\begin{equation*}
BT(t)=\dsum\limits_{j=1}^{\infty }\exp \{-w_{j}^{2}t\}\left\langle .,\varphi
_{j}\right\rangle B\varphi _{j}.
\end{equation*}%
Hence%
\begin{equation*}
\begin{array}{lc}
\dint\limits_{0}^{s}\left\vert B\left[ T(t-u)-T(s-u)\right] \right\vert
_{D(B)}^{2}du &  \\
\leq \dint\limits_{0}^{s}\dsum\limits_{j=1}^{\infty }\left\vert \exp
\{-w_{j}^{2}(t-u)\}-\exp \{-w_{j}^{2}(s-u)\}\right\vert
_{D(B)}^{2}\left\vert \left\langle .,\varphi _{j}\right\rangle \right\vert
_{D(B)}^{2}\left\vert B\varphi _{j}\right\vert _{D(B)}^{2}du &  \\
\leq \dint\limits_{0}^{s}\dsum\limits_{j=1}^{\infty }\left\vert \exp
\{-w_{j}^{2}(s-u)\}\right\vert ^{2}\left\vert 1-\exp
\{-w_{j}^{2}(t-s)\}\right\vert ^{2}\left\vert \left\langle .,\varphi
_{j}\right\rangle \right\vert _{D(B)}^{2}\left\vert B\varphi _{j}\right\vert
_{D(B)}^{2}du &  \\
\leq \dsum\limits_{j=1}^{\infty }\left\vert \left\langle .,\varphi
_{j}\right\rangle \right\vert _{D(B)}^{2}\left\vert B\varphi _{j}\right\vert
_{D(B)}^{2}\left\vert w_{j}^{2}(t-s)\right\vert ^{2\lambda
}\dint\limits_{0}^{s}\exp \{-2w_{j}^{2}(s-u)\}du &  \\
\leq \underset{j}{\sup }\left\vert \left\langle .,\varphi _{j}\right\rangle
\right\vert _{D(B)}^{2}\underset{j}{\sup }\left\vert B\varphi
_{j}\right\vert _{D(B)}^{2}\left\vert t-s)\right\vert ^{2\lambda
}\dsum\limits_{j=1}^{\infty }w_{j}^{-(2-4\lambda )} &  \\
\leq C_{2}\left\vert t-s)\right\vert ^{2\lambda }\dsum\limits_{j=1}^{\infty
}w_{j}^{-(2-4\lambda )} &
\end{array}%
\end{equation*}
where we have used the fact that for all $u\in \lbrack 0,1],$
\begin{equation*}
1-\exp \{-x\}\leq x^{u}\text{ \ \ }(x>0).
\end{equation*}%
For $\lambda \in \lbrack 0,\frac{1}{4}],$ we have $\dsum\limits_{j=1}^{%
\infty }w_{j}^{-(2-4\lambda )}<\infty .$ If we set $\eta =2\lambda $ $(\eta
\in \lbrack 0,\frac{1}{2}]),$ hence%
\begin{equation*}
\dint\limits_{0}^{s}\left\vert B\left[ T(t-u)-T(s-u)\right] \right\vert
_{D(B)}^{2}du\leq C_{1}\left\vert t-s)\right\vert ^{\eta }.
\end{equation*}%
Similarly, for the same $\eta \in \lbrack 0,\frac{1}{2}]$ as above, we have%
\begin{equation*}
\begin{array}{l}
\dint\limits_{s}^{t}\left\vert BT(t-u)\right\vert _{D(B)}^{2}du\leq
\dsum\limits_{j=1}^{\infty }\left\vert \left\langle .,\varphi
_{j}\right\rangle \right\vert _{D(B)}^{2}\left\vert B\varphi _{j}\right\vert
_{D(B)}^{2}\ \dint\limits_{s}^{t}\exp \{-2w_{j}^{2}(t-u)\}du\  \\
\leq \underset{j}{\frac{1}{2}\sup }\left\vert \left\langle .,\varphi
_{j}\right\rangle \right\vert _{D(B)}^{2}\underset{j}{\sup }\left\vert
B\varphi _{j}\right\vert _{D(B)}^{2}\dsum\limits_{j=1}^{\infty
}w_{j}^{-2}(1-\exp \{-2w_{j}^{2}(t-s)\}) \\
\leq C^{\prime }\left\vert t-s\right\vert ^{\eta }\dsum\limits_{j=1}^{\infty
}w_{j}^{-(2-2\eta )} \\
\leq C_{1}^{^{\prime }}\left\vert t-s)\right\vert ^{\eta }%
\end{array}%
\end{equation*}
\end{proof}

\begin{lemma}
\label{A3}(Garsia-Rodemich-Rumsey) Let the function $\psi :[0,\infty \lbrack
\rightarrow \lbrack 0,\infty \lbrack $ non-decreasing with $\underset{%
u\rightarrow +\infty }{lim\ \psi (u)}=+\infty .$ Let the function $%
p:[0,1]\rightarrow \lbrack 0,1]$ be continuous and non-decreasing with $%
p(0)=0.$ \newline
Set $\left\{ \psi ^{-1}(u)=\underset{\psi (v)\leq u}{\sup v}\text{ if }\psi
(0)\leq u<\infty \right. .$ Let $f$ be a continuous function on $[0,1]$ and
suppose that
\begin{equation*}
\dint\limits_{0}^{1}\dint\limits_{0}^{1}\psi (\frac{\left\vert
f(x)-f(y)\right\vert }{p(x-y)})dxdy\leq B<\infty
\end{equation*}%
Then, for all $x,y\in \lbrack 0,1]$, we have%
\begin{equation*}
\left\vert f(x)-f(y)\right\vert \leq 8\dint\limits_{0}^{\left\vert
y-x\right\vert }\psi ^{-1}(\frac{4B}{u^{2}})dp(u).
\end{equation*}
\end{lemma}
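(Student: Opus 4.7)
The plan is to follow the classical dyadic selection argument due to Garsia. First, introduce the auxiliary function $I(u)=\dint_{0}^{1}\psi\!\left(\left\vert f(u)-f(v)\right\vert /p(\left\vert u-v\right\vert )\right)dv$ for $u\in[0,1]$, and observe by Fubini that $\dint_{0}^{1}I(u)\,du\leq B$, so ``most'' points $u$ have $I(u)$ comparable to $B$. Fix $x<y$ in $[0,1]$ and set $d=y-x$; the goal is to bound $\left\vert f(y)-f(x)\right\vert $ by the stated integral.

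The main step is to construct recursively a sequence $(u_n)_{n\geq 0}\subset[x,y]$ with $u_0=y$ which converges to $x$ and whose consecutive distances shrink dyadically on the $p$-scale: choose $d_n$ with $p(d_n)=2^{-n}p(d)$, which is possible by continuity and monotonicity of $p$ together with $p(0)=0$. At step $n$ I select $u_{n+1}$ in a subinterval of $[x,u_n]$ of length of order $d_n$ such that simultaneously $I(u_{n+1})\leq 4B/d_n$ and $\psi(\left\vert f(u_n)-f(u_{n+1})\right\vert /p(\left\vert u_n-u_{n+1}\right\vert ))\leq 4I(u_n)/d_n$ hold. Each of these two conditions fails on a subset of measure at most $d_n/4$ by Chebyshev's inequality (applied respectively to $\int I\leq B$ and to the pointwise bound on $I(u_n)$), so the two ``good'' sets intersect and a suitable $u_{n+1}$ exists.

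Applying $\psi^{-1}$ to the second bound and chaining it with the first from the previous step produces a jump estimate of the form $\left\vert f(u_n)-f(u_{n+1})\right\vert \leq p(d_n)\,\psi^{-1}(16B/(d_{n-1}d_n))$. Using the dyadic identity $p(d_n)-p(d_{n+1})=p(d_n)/2$ together with the monotonicity of the map $u\mapsto\psi^{-1}(4B/u^2)$, each such jump is dominated by a constant multiple of the Stieltjes increment $\dint_{d_{n+1}}^{d_n}\psi^{-1}(4B/u^2)\,dp(u)$, so the telescoping sum $f(y)-f(x)=\sum_n (f(u_n)-f(u_{n+1}))$ (convergent by continuity of $f$) is dominated by a constant multiple of $\dint_0^d\psi^{-1}(4B/u^2)\,dp(u)$. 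A symmetric construction run simultaneously from the left endpoint $x$ absorbs the remaining factor and recovers the stated constant $8$.

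The main obstacle is precise constant-tracking: organising the dyadic scales so that the argument of $\psi^{-1}$ comes out as exactly $4B/u^2$ and the overall multiplicative factor is exactly $8$, and rigorously justifying the comparison between the geometric series $\sum_n(p(d_n)-p(d_{n+1}))\psi^{-1}(4B/d_n^2)$ and the Stieltjes integral $\dint_0^d\psi^{-1}(4B/u^2)\,dp(u)$, given that $\psi^{-1}$ is only the right-continuous generalised inverse of a non-decreasing function. A minor preliminary reduction is to rescale from the stated reference interval $[0,1]$ to the application's interval $[0,T]$, which is routine.
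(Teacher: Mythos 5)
The paper offers no proof of this lemma at all: it is quoted verbatim as the classical Garsia--Rodemich--Rumsey inequality and used as a black box, so the only thing to measure your attempt against is the standard argument of Garsia, which is indeed what you are reconstructing (the auxiliary function $I(u)=\int_0^1\psi(|f(u)-f(v)|/p(|u-v|))\,dv$, a Chebyshev selection of a chain whose scales are dyadic on the $p$-scale via $p(d_{n+1})=\tfrac12 p(d_n)$, domination of each jump by a Stieltjes increment, and telescoping). The architecture is the right one.

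There is, however, one genuine structural gap: you anchor the chain at $u_0=y$. The chaining step bounds $\psi\bigl(|f(u_n)-f(u_{n+1})|/p(|u_n-u_{n+1}|)\bigr)$ by a multiple of $I(u_n)/d_n$, so the very first link requires control of $I(u_0)=I(y)$; but the hypothesis only controls $\int_0^1 I(u)\,du\le B$, and $I$ at the fixed point $y$ can be arbitrarily large, so the first jump $|f(u_0)-f(u_1)|$ is not estimated. The classical proof avoids this by first selecting an \emph{interior pivot} $t_0$ with $I(t_0)\le B$ (possible precisely because $\int I\le B$) and running two chains, one from $t_0$ toward $x$ and one from $t_0$ toward $y$; each contributes $4\int_0^{|y-x|}\psi^{-1}(4B/u^2)\,dp(u)$ and the triangle inequality yields the factor $8$. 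Your ``symmetric construction from the left endpoint'' does not repair this, since $I(x)$ is equally uncontrolled. A second, smaller defect: with your thresholds ($4B/d_n$ and $4I(u_n)/d_n$, exceptional sets of measure $\le d_n/4$) the argument of $\psi^{-1}$ comes out as $16B/(d_{n-1}d_n)$, which is \emph{larger} than $4B/d_n^2$; since $\psi^{-1}$ is non-decreasing, the comparison with $\int\psi^{-1}(4B/u^2)\,dp(u)$ then goes the wrong way. One must take the thresholds $2B/d_n$ and $2I(u_n)/d_n$ (exceptional sets of measure $<d_n/2$, whose complements in an interval of length $d_n$ still intersect) to land on $2I(u_n)/d_{n+1}\le 4B/(d_n d_{n+1})\le 4B/d_{n+1}^2$. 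By contrast, the monotonicity issue you flag at the end is harmless: $u\mapsto\psi^{-1}(4B/u^2)$ is non-increasing simply because $\psi^{-1}$ is non-decreasing, which is exactly what is needed to dominate each term $(p(d_n)-p(d_{n+1}))\,\psi^{-1}(4B/d_n^2)$ by $\int_{d_{n+1}}^{d_n}\psi^{-1}(4B/u^2)\,dp(u)$.
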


\textbf{Appendix B}

\textbf{Proof B1 \ } Since
\begin{equation*}
\left\Vert f\right\Vert _{C^{\overline{\delta }}([0,T],D(B))}=\underset{t\in
\lbrack 0,T]}{Sup}\left\vert f(t)\right\vert _{D(B)}+\underset{t,t^{\prime
}\in \lbrack 0,T],t\neq t^{\prime }}{Sup}\dfrac{\left\vert f(t)-f(t^{\prime
})\right\vert _{D(B)}}{\left\vert t-t^{\prime }\right\vert ^{\overline{%
\delta }}},
\end{equation*}
this means that $\forall $ $t,t^{\prime }\in \lbrack 0,T],t\neq t^{\prime }$
\begin{equation*}
\left\vert f(t)-f(t^{\prime })\right\vert _{D(B)}\leq \left\Vert
f\right\Vert _{C^{\overline{\delta }}([0,T],D(B))}\left\vert t-t^{\prime
}\right\vert ^{\overline{\delta }}
\end{equation*}%
As $f\in B_{R},$ we have%
\begin{equation*}
\left\vert f(t)-f(t^{\prime })\right\vert _{D(B)}\leq R\left\vert
t-t^{\prime }\right\vert ^{\overline{\delta }}
\end{equation*}%
This means that $B_{R}$ is uniformly equicontinuous. By the theorem of
Ascoli-Arzela, $B_{R}$ is a compact of $C([0,T],D(B)).$

\end{document}